\tikzstyle{labeled}=[fill=white,draw=black,circle,inner sep=0, minimum size=4.8mm, thin]
\tikzstyle{simple}=[circle, draw=black, fill=white, inner sep=0, minimum size=2.2mm, thin]
\tikzstyle{tiny}=[circle, draw=black, fill=white, inner sep=0, minimum size=1.5mm, thin]
\tikzstyle{edge-label}=[circle, fill=white, inner sep=.12mm]
\spnewtheorem*{definitionn}{Definition}{\bf}{\rm}
\def\crg{\mathop{\mbox{\sl cr}}}
\begin{document}
\mainmatter              
\title{On 13-Crossing-Critical Graphs with Arbitrarily Large Degrees%
\,\thanks{This is a full and extended version of the paper published at the EuroComb~2021 conference as DOI 10.1007/978-3-030-83823-2\_9.}}
\titlerunning{On $13$-Crossing-Critical Graphs}
\author{Petr Hlin\v en\'y \and Michal Korbela}
\authorrunning{Petr Hlin\v en\'y and Michal Korbela}
\institute{Masaryk University, Faculty of Informatics,
        Botanick\'a 68a, Brno, Czech Republic\\ 
\email{hlineny@fi.muni.cz, kabell999@gmail.com}}
%
\maketitle              
\begin{abstract}
A recent result of Bokal et al.\ [Combinatorica, 2022] proved that {the exact minimum value} of $c$ such that 
$c$-crossing-critical graphs do \emph{not} have bounded maximum degree is~$c=13$.
The key to that result is an inductive construction of a family of $13$-crossing-critical graphs with many vertices of arbitrarily high degrees.
While the inductive part of the construction is rather easy, it all relies on the fact that a certain $17$-vertex base graph has the crossing
number $13$, which was originally verified only by a machine-readable  computer proof.
We provide a relatively short self-contained computer-free proof of the latter fact.
Furthermore, we subsequently generalize the critical construction in order to provide a definitive answer to a remaining open question
of this research area; we prove that for every $c\geq13$ and integers $d,q$, there exists a $c$-crossing-critical graph with more than $q$
vertices of {\em each} of the degrees $3,4,\ldots,d$.
\keywords{graph, crossing number, crossing-critical families}
\end{abstract}


\section{Introduction}

The {\em crossing number} $\crg(G)$ of a graph $G$ is the minimum number 
of (pairwise) edge crossings in a drawing of $G$ in the plane.
To resolve ambiguity, we consider drawings of graphs such that
no edge passes through another vertex and
no three edges intersect in a common point which is not their end.
A {\em crossing} is then an intersection point of two
edges that is not a vertex, and we always assume a finite number of crossings.

While for graphs with many (e.g., more than linear amount of) edges, it is not surprising to have a high crossing number,
graphs with relatively few edges (e.g., cubic ones) may have high crossing number if there is ``a lot of nonplanarity'' in them.
The latter is captured through the ``critical obstructions'' in which, informally, the crossing number drops down in every proper subgraph:
\begin{definition}
A graph $G$ is {\em$c$-crossing-critical} if $\crg(G)\ge c$, 
but for every edge~$e$ of $G$ we have $\crg(G-e)<c$.
\end{definition}
Note that our graphs, and in particular the crossing-critical graphs we are going to deal with,
are not required to be simple (they may contain parallel edges),
but one may always subdivide parallel edges without changing the crossing number.

There are two $1$-crossing-critical graphs up to subdivisions, $K_5$ and $K_{3,3}$,
but for every $c\geq2$ there exists an infinite number of $3$-connected $c$-crossing-critical graphs.
The latter fact nicely follows from a nowadays classical construction of Kochol~\cite{kochol87}, depicted in Figure~\ref{fig:crcrbasics}.
The same picture also shows another notable crossing-critical construction from~\cite{cit:pathcrit}.
These two constructions outline the two basic principles used in all
constructions of infinite $c$-crossing-critical families ever since;
using a suitably chosen ``planar belt'' which is joined at the ends twisted (as the M\"obius strip),
and similarly, a ``planar belt'' which is joined straight (as on a cylinder) and then crossed by a suitably attached additional edge(s).

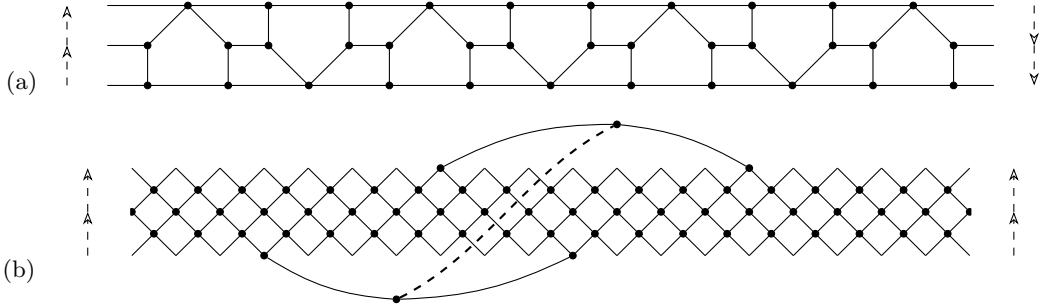
\begin{figure}[tb]
\centering
(a)\quad
\begin{tikzpicture}[scale=0.53]
\tikzstyle{every node}=[draw, shape=circle, minimum size=2.5pt,inner sep=0pt, fill=black]
\tikzstyle{every path}=[color=black]
\foreach \i in {0,...,3} {
\foreach \j in {1,-1} {
  \ifthenelse{\i<3 \OR \j=1}{
	\def\xx{6*\i+1.5-1.5*\j};
	\node at (\xx,1-\j) {};	\node at (\xx,1) {};
	\node at (\xx+1,1+\j) {};	\node at (\xx+2,1) {};
	\node at (\xx+2,1-\j) {};
 	\draw (\xx,1-\j) -- (\xx,1) -- (\xx+1,1+\j) -- (\xx+2,1) -- (\xx+2,1-\j);
 	\draw (\xx+2,1) -- (\xx+3,1);
  }{}
}}
\draw (-1,0) -- (21,0);	\draw (-1,1) -- (0,1);	\draw (-1,2) -- (21,2);
\foreach \i in {0,1} {
	\draw [dashed,-{Stealth[open]}] (24*\i-2,2*\i) -- (24*\i-2,1); 
	\draw [dashed,-{Stealth[open]}] (24*\i-2,1) -- (24*\i-2,2-2*\i); 
}
\end{tikzpicture}
\\\quad(b)
\begin{tikzpicture}[scale=0.58]
\useasboundingbox (0,-0.9) rectangle (23.2,4);
\tikzstyle{every node}=[draw, shape=circle, minimum size=2.5pt,inner sep=0pt, fill=black]
\tikzstyle{every path}=[color=black]
\foreach \i in {1,...,20} {
	\draw (\i,0) -- (\i+2,2);
	\draw (\i,2) -- (\i+2,0);
	\node at (\i+0.5,0.5) {};
	\node at (\i+1,1) {};
	\node at (\i+1.5,1.5) {};
}
\draw[color=white,fill=white] (0,0) rectangle (1.95,2);
\draw[color=white,fill=white] (21.05,0) rectangle (23,2);
\node (u) at (8,-1) {}; \node (u1) at (5,0) {}; \node (u2) at (12,0) {};
\node (v) at (13,3) {}; \node (v1) at (9,2) {}; \node (v2) at (16,2) {};
\draw[thick,dashed] (u) .. controls (10,0) and (11,2) .. (v);
\draw (u) to[bend right=12] (u2);
\draw (u) to[bend left=14] (u1);
\draw (v) to[bend left=12] (v2);
\draw (v) to[bend right=14] (v1);
\foreach \i in {1,22} {
	\draw [dashed,-{Stealth[open]}] (\i,0) -- (\i,1); 
	\draw [dashed,-{Stealth[open]}] (\i,1) -- (\i,2); 
}
\end{tikzpicture}
\caption{An illustration of the two basic approaches to crossing-critical graph constructions.
(a) The classical construction of $2$-crossing-critical graphs by Kochol 
in which the ends of the planar belt are joined twisted. 
(b)~A~construction of $c$-crossing-critical graphs ($c\geq3$, here $c=4$) 
by Hlin\v en\'y in which the
ends of the middle planar belt are joined straight, not twisted.}
\label{fig:crcrbasics}
\end{figure}

A long-sought asymptotic characterization of $c$-crossing-critical families
(for every fixed~$c$) has been proved only recently by Dvo\v r\'ak et al.~\cite{dvorakHlinenyMohar18},
confirming prior experience with the two aforementioned principles of crossing-critical constructions.
Namely, there are finitely many basic $c$-crossing-critical graphs for every $c\geq2$, and all other $c$-crossing-critical graphs
can be constructed iteratively from the basic graphs roughly as follows.
Take one of the graphs and find a suitable (long and thin) ``planar belt'' within it, which is attached to the rest of the graph
only at the ends, straight or twisted.
Then ``prolong'' the internal part of this belt by duplicating suitable well-defined repeated substructures within it.

Constructions as sketched in Figure~\ref{fig:crcrbasics}, in particular, naturally have an upper bound on the maximum degree in terms of~$c$.
However, the characterization given in~\cite{dvorakHlinenyMohar18} leaves room for a rather obscure third possibility of a construction
in which, informally saying, a part of the twisted planar belt boundary
(as in Figure~\ref{fig:crcrbasics}(a)) ``collapses'' into a single vertex of high degree.
The existence of such a construction for high values of $c$ has been implicitly confirmed already by Dvo\v r\'ak and Mohar~\cite{cit:dvorakmohar},
that time disproving an unpublished opposite conjecture of Richter,
but no explicit examples were known until a more recent exhaustive work of Bokal et al.~\cite{DBLP:journals/combinatorica/BokalDHLMW22}:


\smallskip
\begin{theorem}[Bokal, Dvo\v{r}{\'a}k, Hlin\v{e}n{\'y}, Lea{\~n}os, Mohar and Wiedera~\cite{DBLP:journals/combinatorica/BokalDHLMW22}]~
\label{thm:main13}
\\a) For each $1\leq c\leq12$, there exists a constant $D_c$ such that every $c$-crossing-critical graph has vertex degrees at most~$D_c$.
\\b) For each $c\geq13$ and every integers $q,d$,
one can construct a $c$-crossing-critical graph which has at least $q$ vertices of degree at least~$d$.
\end{theorem}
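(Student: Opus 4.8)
The plan is to treat the two parts by quite different means: part~(a) is a finite structural statement, whereas part~(b) is an explicit construction.

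For part~(a), I would start from the asymptotic characterization of $c$-crossing-critical graphs of Dvo\v{r}\'ak, Hlin\v{e}n\'y and Mohar~\cite{dvorakHlinenyMohar18}: every sufficiently large $c$-crossing-critical graph is obtained by gluing a bounded number of ``tiles'' of bounded size in a path- or cycle-like fashion, and the only mechanism that can produce an unbounded degree is the ``collapse'' of a piece of a twisted belt boundary into a single vertex. It then remains to show that for $c\le 12$ this collapse cannot happen --- i.e.\ that it would force $\crg\ge 13$. I expect this to require a finite but sizeable case analysis over the admissible tiles and their interfaces (exactly the part carried out with computer assistance in~\cite{bokal2022bounded}); from the non-existence of high-degree tiles one extracts a uniform degree bound $D_c$.

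For part~(b) I would build the graph by hand. First fix a base graph $H$ on $17$ vertices with a distinguished ``hub'' vertex $v_0$ such that $\crg(H)=13$ and $H$ is $13$-crossing-critical: a drawing realizing exactly $13$ crossings is easy to exhibit, but the matching lower bound $\crg(H)\ge 13$ is the crucial ingredient and I would prove it by a discharging/case analysis over the cyclic rotation at $v_0$ together with the few nonplanar subconfigurations forced inside $H$ --- this is precisely the self-contained computer-free argument supplied in the present paper. Next I would set up an inductive ``inflation'' step: attach a suitably designed planar gadget at $v_0$ so that $\deg(v_0)$ increases by a fixed amount $\delta\ge 1$ while (i)~a drawing with exactly $13$ crossings survives and (ii)~no edge becomes deletable, so $13$-crossing-criticality persists. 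Iterating the step $\lceil (d-\deg_H(v_0))/\delta\rceil$ times yields a $13$-crossing-critical graph with one vertex of degree $\ge d$; chaining $q$ such inflated gadgets cyclically along a twisted belt as in Figure~\ref{fig:crcrbasics}(a) then produces $q$ high-degree hubs, with the standard twisted-belt accounting keeping the crossing number at $13$ and preserving criticality (deleting any edge lets one ``unwind'' the belt locally and save a crossing). Finally, to reach an arbitrary $c\ge 13$ I would amalgamate this $13$-critical graph with a fixed auxiliary crossing-critical graph by a standard gluing operation (a zip product on tiles) under which crossing numbers add and criticality is inherited, obtaining a $c$-crossing-critical graph that still contains at least $q$ vertices of degree $\ge d$.

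In both parts the decisive difficulty is the crossing-number \emph{lower} bound: in~(a), ruling out degree blow-up for $c\le 12$ (the exhaustive tile analysis), and in~(b), proving $\crg(H)\ge 13$ and that neither the inflation step nor the belt construction lets the crossing number drop below $13$. The latter is the heart of what this paper renders computer-free; the remaining ingredients --- producing good drawings, bookkeeping degrees, and checking edge-criticality locally --- are comparatively routine.
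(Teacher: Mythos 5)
First, note that Theorem~\ref{thm:main13} is quoted from \cite{bokal2022bounded}; the present paper does not reprove it, but supplies a computer-free proof of its key ingredient, $\crg(G_{13})\ge 13$ (Theorem~\ref{thm:main13alt}), and a strengthening (Theorem~\ref{thm:main13impro}). Your outline of part~(b) does match the architecture of the cited proof: a $17$-vertex base graph with crossing number $13$, an inductive degree-increasing step, and a zip product with $1$-crossing-critical graphs to reach arbitrary $c\ge 13$. However, for part~(a) you do not actually give an argument: you defer to ``a finite but sizeable case analysis over the admissible tiles,'' and you misattribute the computer assistance of \cite{bokal2022bounded} to this part --- the machine proof there concerns precisely the lower bound $\crg(G_{13})\ge13$ needed for part~(b), whereas part~(a) is a long human structural proof built on the characterization of \cite{dvorakHlinenyMohar18}. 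As written, your part~(a) is a restatement of the difficulty rather than a proof.

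For part~(b), the concrete mechanism you propose --- gluing a planar gadget at a hub $v_0$ so that its degree grows by $\delta$ while a $13$-crossing drawing and criticality both survive, then chaining $q$ such hubs ``cyclically along a twisted belt'' --- is not the construction that is known to work, and the two claims your inductive step needs ((i)~the crossing number does not drop below $13$ after inflation, and (ii)~no edge becomes deletable) are exactly the hard part and are left unargued. In the actual construction (Figure~\ref{fig:main13}(b), Definitions~\ref{def:ccg13k}--\ref{def:ccg13kkk}) the high degrees arise by attaching several wedges to a single contracted vertex of a \emph{linear} spine, not by a cyclic belt; the lower bound for the iterated graphs is obtained not by ``belt accounting'' but by contracting uncrossed spine edges back to a small instance and invoking $\crg(G_{13})\ge13$ together with a wedge-reduction lemma (Lemma~\ref{lem:wedge-}); and criticality is certified by exhibiting, for each deleted edge, an explicit drawing with fewer than $13$ crossings. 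Without these ingredients the inflation step of your plan is a conjecture, not a proof, even though the overall strategy points in the right direction.
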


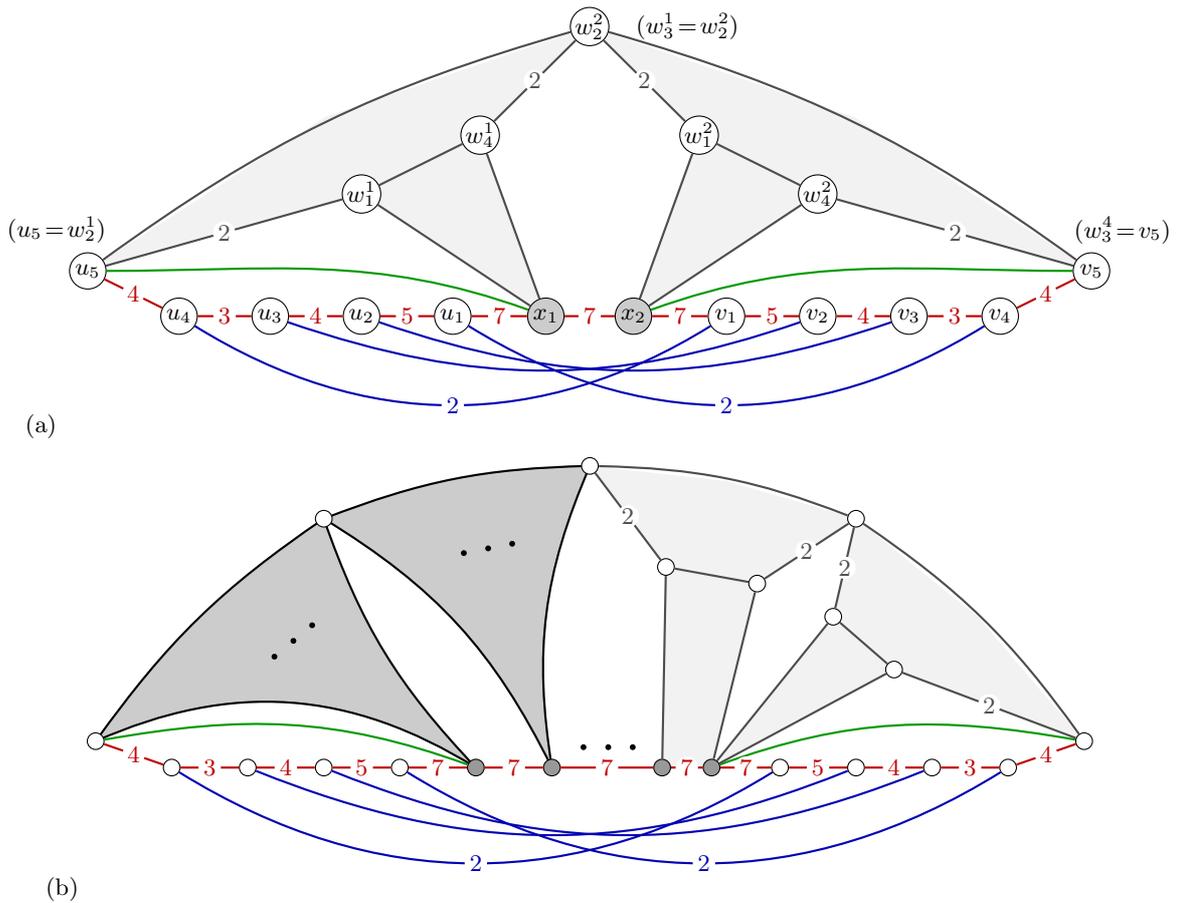
\begin{figure}
 \centering
(a)
 \begin{tikzpicture}[scale=1.2]\footnotesize
  \tikzset{every node/.style={labeled}}
  \tikzset{every path/.style={thick}}
  \node[fill=black!20!white] (x) at (-0.98,0) {$x_1$};
  \node[fill=black!20!white] (xx) at (-0.02,0) {$x_2$};
  \node (u1) at (-2,0) {$u_1$};
  \node (u2) at (-3,0) {$u_2$};
  \node (u3) at (-4,0) {$u_3$};
  \node (u4) at (-5,0) {$u_4$};
  \node[label=above:{{\hspace*{-6ex}($u_5\!=\!w_2^1$)}}] (u5) at (-6,0.5) {$u_5$};
  \node (v1) at (1,0) {$v_1$};
  \node (v2) at (2,0) {$v_2$};
  \node (v3) at (3,0) {$v_3$};
  \node (v4) at (4,0) {$v_4$};
  \node[label=above:{{($w_3^4\!=\!v_5$)\hspace*{-6ex}}}] (v5) at (5,0.5) {$v_5$};
  \node (h2) at (-3,1.35) {$w^{1}_1$};
  \node (h3) at (-1.7,2) {$w^{1}_4$};
  \node[label=right:{{\quad($w_3^{1}\!=\!w^2_2$)}}] (k1) at (-0.5,3.2) {$w^2_2$};
  \node (k2) at (0.7,2) {$w^{2}_1$};
  \node (k3) at (2,1.35) {$w^{2}_4$};
  \tikzset{every node/.style={edge-label}}
  \draw[green!60!black] (x) to[in=0, out=160] (u5);
  \draw[green!60!black] (xx) to[out=20, in=180] (v5);
  \draw[red!75!black] (u5) -- node {$4$} (u4) -- node {$3$} (u3)
   -- node {$4$} (u2) -- node {$5$} (u1) -- node {$7$} (x) -- node {$7$} (xx)
   -- node {$7$} (v1) -- node {$5$} (v2) -- node {$4$} (v3) -- node {$3$} (v4) -- node {$4$} (v5);
  \draw[blue!70!black]
   (u2) edge[bend right=19] (v3)
   (u3) edge[bend right=19] (v2)
   (u1) edge[bend right=32] node {2} (v4)
   (u4) edge[bend right=32] node {2} (v1)
  ;
  \draw[black!70!white] (u5) edge[thick] node {2} (h2) (h2) -- (h3) (h3) edge[thick] node {2} (k1)
     (k1) edge[bend right=10] (u5)
   (k1) edge[thick] node {2} (k2) (k2) -- (k3) (k3) edge[thick] node {2} (v5)
     (v5) edge[bend right=10] (k1)
   (h2) -- (x) -- (h3)  (k2) -- (xx) -- (k3)
  ;
  \begin{scope}[on background layer]
    \fill[color=black!5!white] (x.center) -- (h2.center) -- (u5.center)
      -- (-3.4,2.2) -- (k1.center) -- (h3.center) -- cycle;
    \fill[color=black!5!white] (xx.center) -- (k3.center) -- (v5.center)
      -- (2.4,2.2) -- (k1.center) -- (k2.center) -- cycle;
  \end{scope}
 \end{tikzpicture}

\medskip(b)
 \begin{tikzpicture}[scale=1]\footnotesize
  \tikzset{every node/.style={simple}}
  \tikzset{every path/.style={thick}}
  \node[fill=black!40!white] (x) at (-1,0) {};
  \node[fill=black!40!white] (x1) at (0,0) {};
  \node[fill=black!40!white] (xx1) at (1.45,0) {};
  \node[fill=black!40!white] (xx) at (2.1,0) {};
  \node (u1) at (-2,0) {};
  \node (u2) at (-3,0) {};
  \node (u3) at (-4,0) {};
  \node (u4) at (-5,0) {};
  \node (u5) at (-6,0.35) {};
  \node (v1) at (3,0) {};
  \node (v2) at (4,0) {};
  \node (v3) at (5,0) {};
  \node (v4) at (6,0) {};
  \node (v5) at (7,0.35) {};
  \node (g1) at (-3,3.3) {};
  \node (h1) at (0.5,4) {};
  \node (h2) at (1.5,2.66) {};
  \node (h3) at (2.7,2.44) {};
  \node (k1) at (4,3.3) {};
  \node (k2) at (3.7,2) {};
  \node (k3) at (4.5,1.3) {};
  \tikzset{every node/.style={edge-label}}
  \draw[green!60!black] (x) to[in=10, out=160] (u5);
  \draw[green!60!black] (xx) to[out=20, in=170] (v5);
  \draw[red!75!black] (u5) -- node {$4$} (u4) -- node {$3$} (u3) -- node {$4$}
   (u2) -- node {$5$} (u1) -- node {$7$} (x) -- node {$7$} (x1) -- node {$7$} (xx1) -- node {$7$} (xx) -- node {$7$} (v1) -- node {$5$}
   (v2) -- node {$4$} (v3) -- node {$3$} (v4) -- node {$4$} (v5);
  \draw[blue!70!black]
   (u2) edge[bend right=22] (v3)
   (u3) edge[bend right=22] (v2)
   (u1) edge[bend right=32] node {2} (v4)
   (u4) edge[bend right=32] node {2} (v1)
  ;
  \draw[black!70!white] (h1) edge[thick] node {2} (h2) (h2) -- (h3) (h3) edge[thick] node {2} (k1) (k1) edge[bend right=10] (h1)
   (k1) edge[thick] node {2} (k2) (k2) -- (k3) (k3) edge[thick] node {2} (v5) (v5) edge[bend right=10] (k1) (h2) -- (xx1)
   (xx) -- (h3) (k2) -- (xx) -- (k3)
  ;
  \begin{scope}[on background layer]
   \draw[fill=black!20!white] (x.center) to[bend right=28] (u5.center) to[bend left=10] (g1.center) to[bend right=12] cycle;
   \draw[fill=black!20!white] (x1.center) to[bend right=16] (g1.center) to[bend left=10] (h1.center) to[bend right=16] cycle;
   \fill[color=black!5!white] (xx.center) -- (xx1.center) -- (h2.center) -- (h1.center) -- (2.3,3.8) -- (k1.center) -- (h3.center) -- cycle;
   \fill[color=black!5!white] (xx.center) -- (k3.center) -- (v5.center) -- (5.6,2) -- (k1.center) -- (k2.center) -- cycle;
  \end{scope}
  \node[rotate=40, draw=none, fill=none] (label) at (153:3.75) {\huge$\cdots$};
  \node[rotate=11, draw=none, fill=none] (label) at (105:3) {\huge$\cdots$};
  \node[draw=none, fill=none] (label) at (0.8,0.25) {\huge$\cdots$};
 \end{tikzpicture}

 \caption{The inductive construction of $13$-crossing-critical graphs from
Theorem~\ref{thm:main13} (note that all $13$ depicted crossings are only between the blue edges).
The edge labels in the picture represent the number of parallel edges
between their end vertices (e.g., there are $7$ parallel edges between $x_1$ and~$x_2$).
Figure (a) defines the \emph{base graph $G_{13}$} of the construction, and (b) outlines
the general construction which arbitrarily ``duplicates'' the two wedge-shaped gray subgraphs of $G_{13}$ and the gray vertices $x_1,x_2$,
and possibly also ``splits'' the tips of the wedge-shaped gray subgraphs.
See further Section~\ref{sec:improved}.}
 \label{fig:main13}
\end{figure}

The critical construction of Theorem~\ref{thm:main13}(b) is outlined for~$c=13$ in Figure~\ref{fig:main13}.
Figure~\ref{fig:main13}(a) defines the $17$-vertex $13$-crossing-critical (multi)graph $G_{13}$ 
which is the base graph of the full inductive construction.
One can see in~\cite{DBLP:journals/combinatorica/BokalDHLMW22} that the proof of Theorem~\ref{thm:main13}(b) follows straightforwardly 
(using induction) from the fact that $\crg(G_{13})\geq13$.
However, for the claim that $\crg(G_{13})\geq13$, only a machine-readable computer proof is provided in~\cite{DBLP:journals/combinatorica/BokalDHLMW22};
the proof is based on an ILP branch-and-cut-and-price routine~\cite{chimaniWiedera16} with about a thousand cases of up to hundreds of constraints each.
Our first goal is to provide a much simpler handwritten proof:

\begin{theorem}[a computer-free alternative to Theorem~\ref{thm:main13}(b)]
\label{thm:main13alt}
$\crg(G_{13})\geq13$.
\end{theorem}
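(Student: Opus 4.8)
The plan is to establish $\crg(G_{13})\geq 13$ by a careful case analysis driven by the structure of $G_{13}$, exploiting the heavy edge multiplicities (the ``$7$''-bundles and the ``$2$''-bundles) to reduce the problem to a small number of essentially planar configurations. First I would observe that in any optimal drawing $D$ of $G_{13}$, no two of the seven parallel edges in the $x_1x_2$-bundle (nor in the $u_1x_1$, $x_2v_1$, or the other labelled bundles) can cross each other: if some bundle of $m\ge 2$ parallel edges had an internal crossing, rerouting one copy alongside an uncrossed copy strictly decreases crossings, so we may assume each bundle is drawn as a ``fat edge'' whose copies are mutually non-crossing and hence cross the rest of the drawing the same number of times up to a cheapest representative. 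This lets me replace $G_{13}$ by a weighted graph $H$ on $17$ vertices, where an edge of weight $w$ contributes $w$ to every crossing it participates in, and I must show the weighted crossing number of $H$ is at least $13$.

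Next I would identify the ``spine'': the red path $P$ from $u_5$ through $u_4,u_3,u_2,u_1,x_1,x_2,v_1,\dots,v_5$, together with the blue ``tunnelling'' edges $u_2v_3,u_3v_2$ (weight $1$) and $u_1v_4,u_4v_1$ (weight $2$), and the two grey ``wedge'' gadgets attached around $x_1$ and $x_2$ via the weight-$2$ edges and the green arcs $x_1u_5$, $x_2v_5$. The key combinatorial fact to extract is that contracting each weight-$7$ red segment and each grey wedge to a point produces a graph containing several edge-disjoint Kuratowski-type subdivisions sharing the heavy bundles; more usefully, I would set up a collection of subgraphs $F_1,\dots,F_k$ of $G_{13}$ such that each $F_i$ needs at least one crossing in $D$ and such that every pair of edges of $G_{13}$ lies in at most a bounded number of the $F_i$, then apply a counting (LP-duality / edge-weighting) argument: $\sum_i \crg_D(F_i)\le (\text{max multiplicity})\cdot\crg(G_{13})$. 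Choosing the $F_i$ and their weights so that the left side is forced to be at least $13\cdot(\text{max multiplicity})$ gives the bound. In practice I expect the cleanest route is a hybrid: first argue that at least $11$ or $12$ crossings are forced by a global bundle-counting argument (each of the four blue edges must cross the red spine or a grey wedge, and the weight-$2$ edges double the count), and then handle the last one or two crossings by a finite case check on how the green arcs and the grey wedges can be routed relative to the spine in a drawing with few crossings.

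The technical backbone will be a ``tile''/``belt'' decomposition: $G_{13}$ is morally two copies of a crossing-critical tile glued along the $x_1x_2$ weight-$7$ bundle, twisted, so I would make precise a statement of the form ``any drawing of $G_{13}$ in which the $x_1x_2$-bundle is crossed $j$ times restricts to a drawing of each half-tile with at least $\lceil (13-j)/2\rceil$ crossings inside,'' reducing everything to a lower bound on the crossing number of one explicit $\sim 10$-vertex weighted half-tile, which is small enough for an exhaustive-yet-human case analysis on the rotation system at the three or four high-degree vertices. I expect the main obstacle to be exactly this half-tile analysis: ruling out a hypothetical drawing of $G_{13}$ with only $12$ crossings requires showing that the grey wedge cannot be ``absorbed'' cheaply into a face of the planar part of the spine-plus-blue-edges drawing, and keeping that case split short and genuinely complete — rather than ballooning back into the hundreds of cases of the original ILP proof — is the real work; the multiplicities are the lever that keeps it finite, since a single crossing on any weight-$7$ edge already costs $7$ and on any weight-$2$ edge costs $2$, so only very few bundles can be crossed at all in a sub-$13$ drawing.
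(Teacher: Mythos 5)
Your opening normalization (treating each bundle of parallel edges as a single weighted edge whose copies are drawn together) is exactly the paper's Proposition~\ref{pro:folklorecr}(b), and your general instinct---that the weight-$7$ and weight-$2$ bundles sharply limit which edges can be crossed at all in a sub-$13$ drawing---is the right lever. But what follows is a plan whose decisive steps are missing, and the two concrete reductions you propose fail as stated. First, the ``half-tile'' reduction: you want that a drawing in which the $x_1x_2$-bundle is crossed $j$ times induces a drawing of each half with at least $\lceil(13-j)/2\rceil$ crossings \emph{inside} that half. In the optimal drawing of $G_{13}$ (Figure~\ref{fig:main13}(a)) all $13$ crossings are between blue edges $u_iv_j$, every one of which has one end in each half; none of these crossings is internal to either half-tile, so your inequality fails already for $j=0$. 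Second, your proposed counting baseline---``each of the four blue edges must cross the red spine or a grey wedge''---is false for the same reason: in the optimal drawing the blue edges cross only each other. The true dichotomy is more delicate: a blue edge attached to the red path from the wedge side is forced (by Jordan-curve arguments) to cross gray paths, one attached from the opposite side is forced to cross other blue edges, and a mixed distribution forces extra crossings on the green edges $u_5x_1$, $v_5x_2$; only the combination of these three counts, checked over all distributions of the six units of blue weight between the two sides, yields $12$ or $13$.

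That checking is where the content of the theorem lies, and it is precisely what your proposal defers (``keeping that case split short and genuinely complete\dots is the real work''). The paper's route is: (i)~show by explicit weight-bounded reroutings that an optimal drawing can be assumed to have no red--red and no red--blue crossings; (ii)~if the red-plus-gray subgraph $G_0$ is drawn planarly, enumerate its two planar embeddings and run the top/bottom/switching case analysis sketched above; (iii)~otherwise count ``refined'' crossings (blue against blue, gray, or green) via a twenty-row table and add the crossing forced by non-planarity of $G_0$. Your LP-duality/covering idea (a family of subgraphs $F_i$ each forcing a crossing, with bounded pair-multiplicity) is a genuinely different strategy, but you neither exhibit the family nor verify the multiplicity bound, and for a bound as tight as $13$ on this graph it is far from clear that such a clean cover exists---which is essentially why the original authors resorted to an ILP. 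As it stands the proposal identifies the right objects and the right coloured decomposition but does not contain a proof.
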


Secondly, we further generalize the construction such that we can prove the following strengthening of Theorem~\ref{thm:main13}(b):
\begin{theorem}\label{thm:main13impro}
For each $c\geq13$ and every integers $q,d$, one can construct a $3$-connected $c$-crossing-critical graph 
which has at least $q$ vertices of each of the degrees $3$, $4$, $\ldots$,~$d$.
\end{theorem}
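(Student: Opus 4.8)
\noindent The plan is to refine the inductive construction behind Theorem~\ref{thm:main13}(b) (outlined in Figure~\ref{fig:main13}) by equipping the tips of its wedge-shaped blocks with a flexible \emph{splitting pattern}, so that the tips realize \emph{all} of the prescribed degrees and not just arbitrarily large ones. Concretely, for each admissible vector $\vec k=(k_1,\dots,k_m)$ of positive integers I would define a base graph $G_{13}(\vec k)$ obtained from $G_{13}$ by replacing each collapsed wedge-tip by a chain of $m$ vertices in which the $i$-th vertex absorbs $k_i$ consecutive ``rungs'' of the wedge boundary, so that this vertex has degree $k_i+2$ (the value $k_i=1$ reproducing the fully split, all-degree-$3$ situation already allowed in Theorem~\ref{thm:main13}(b)). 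Choosing the entries of $\vec k$ among $\{1,2,\dots,d-2\}$ then makes a single block carry vertices of every degree in $\{3,4,\dots,d\}$.

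First I would dispose of the easy part. After subdividing the parallel-edge bundles — which changes neither the crossing number nor crossing-criticality — each copy of a wedge block and of a spine segment already contributes vertices of degree~$3$, so the number of degree-$3$ vertices grows linearly with the number of blocks; the degrees $4,5,\dots,d$ are supplied by the split tips as just described. Running the inductive ``duplication'' step of Theorem~\ref{thm:main13}(b) a large number $N$ of times then yields a graph with at least $q$ vertices of each of the degrees $3,4,\dots,d$, so the whole issue is to keep this graph $c$-crossing-critical.

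The substance of the proof is to show that $\crg(G_{13}(\vec k))\ge 13$, and then that $G_{13}(\vec k)$ together with its inductive descendants remains $c$-crossing-critical. The lower bound \emph{cannot} be obtained for free by monotonicity: passing from $G_{13}$ to $G_{13}(\vec k)$ only loosens vertex identifications, an operation that can only \emph{decrease} the crossing number, so one genuinely has to re-run a mild extension of the argument behind Theorem~\ref{thm:main13alt}. That argument, however, turns on the $13$ mutually forced crossings among the ``twisted-belt'' strands, and this forcing mechanism is insensitive to how the belt boundary is partitioned into tip vertices; I would therefore phrase the lower-bound proof uniformly for the whole family $\{G_{13}(\vec k)\}$, with the case $\vec k=(1,1,\dots,1)$ being the hardest and essentially coinciding with Theorem~\ref{thm:main13alt}. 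Crossing-criticality is then verified edge by edge: the edges inherited from $G_{13}$ are handled exactly as in the proof of Theorem~\ref{thm:main13}(b), and the only genuinely new edges — those lying inside a split tip — occur in the same local configurations as certain edges of $G_{13}$, so the same local rerouting (delete the edge, redraw with at most $12$ crossings) applies after routine bookkeeping.

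Finally, to cover an arbitrary $c\ge 13$ rather than only $c=13$, I would attach to a low-degree spine vertex of the constructed graph a small $(c-12)$-crossing-critical graph (iteratively, $K_{3,3}$ at a degree-$3$ vertex works) via a zip product, an operation that adds crossing numbers and preserves crossing-criticality under the standard coherence conditions, which are straightforward to check here; this adds only a bounded number of vertices and hence leaves the degree-spectrum guarantee intact. The main obstacle I expect is the crossing-number lower bound of the previous paragraph — making the ``forcing'' argument for $G_{13}$ robust enough to apply uniformly to all split patterns $\vec k$ — together with the more mechanical but lengthy edge-by-edge criticality check for the enlarged family.
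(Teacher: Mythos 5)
There is a genuine gap, and it sits exactly where the real work of the theorem lies: your construction has no mechanism for realizing the degrees $5,7,8,\ldots,14$, nor degree~$3$. The vertices whose degree you can tune by choosing a splitting pattern are the spine vertices (the contracted tips of the wedges), and every such vertex is incident to two bundles of red spine edges of weight~$7$; hence its degree is $14+2k_i\geq 15$, not $k_i+2$ as you claim. Varying the pattern $\vec k$ therefore yields arbitrarily many vertices of each degree $\geq 15$, while the interior vertices of the wedges contribute only degrees $4$ and~$6$, and subdividing the parallel-edge bundles creates vertices of degree~$2$, not~$3$. So the ``easy part'' of your argument does not in fact dispose of degree~$3$, and nothing in your construction produces a vertex of degree $5$ or~$9$, say.

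The paper closes this gap with two local surgery lemmas whose content is the actual novelty of the proof. Lemma~\ref{lem:plus3deg} (imported from the earlier paper) splits a wedge vertex to create a degree-$3$ vertex, and the new Lemma~\ref{lem:plus5deg} reduces the degree of a spine vertex $s$ from $2h+a$ to $2b+a$ (for any $a+1\leq b\leq h-1$) by rerouting weight $h-b$ of each of the two incident spine bundles into a direct ``bypass'' edge $t_1t_2$; with $h=7$ this realizes every degree in $\{5,7,8,\ldots,14\}$. Both lemmas require a nontrivial argument that the surgery does not decrease the crossing number and preserves criticality, which is where the condition $c\leq 2h+1$ and the hypothesis that $f_1,f_2$ be consecutive around $s$ in some subcritical drawing enter. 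Your remaining points --- that the lower bound must be re-proved because loosening identifications can only decrease the crossing number (the paper does this in Lemma~\ref{lem:newgeq13} by contracting back to previously handled cases rather than re-running the forcing argument from scratch), the edge-by-edge criticality check, and the zip product with copies of $K_{3,3}$ for $c>13$ --- are sound and match the paper, but without the degree-reduction surgeries the full degree spectrum $3,4,\ldots,d$ is out of reach of your construction.
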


We remark that in both Theorem~\ref{thm:main13impro} and Theorem~\ref{thm:main13}, the constructed graphs
can be either chosen to be $3$-connected, or $2$-connected and simple (via a subdivision of the former graphs).
We, however, do not know whether the conditions of $3$-connectivity and simplicity can be combined together.

\section{Theorem~\ref{thm:main13alt} without Assistance of a Computer}

We divide the self-contained proof of Theorem~\ref{thm:main13alt} into two steps.
We call {\em red} the edges of $G_{13}$ (Fig.~\ref{fig:main13}(a)) which form the path with multiple edges on
$(u_5,u_4,u_3,u_2,u_1,x_1,x_2,v_1,v_2,v_3,v_4,v_5)$,
and we call {\em blue} the edges with one end $u_i$ and other end $v_j$ where $i,j\in\{1,2,3,4\}$.
\begin{enumerate}[(1)]
\item We will first show that there is an {\em optimal drawing} (i.e., one minimizing the
number of crossings) of $G_{13}$ such that no red edge crosses a red or a
blue edge. Note that blue--blue crossings are still allowed (and likely to occur).
\item Secondly, while considering drawings as in the first point,
we will refine our analysis by counting only selected crossings
(roughly, those involving a blue edge),
and prove at least $13$ of them, or at least $12$ with the remaining drawing
still being non-planar.
\end{enumerate}

\subsection{Restricting optimal drawings of $G_{13}$}

Before proceeding further, we need some basic facts about the crossing number.

\begin{proposition}[folklore]\label{pro:folklorecr}
a) If $D$ is an optimal drawing of a graph $G$, then no two edges cross more
than once and edges sharing a common end do not cross at all in~$D$.
\\b) If $e$ and $f$ are parallel edges in $G$ (i.e., $e,f$ have the same
end vertices), then there is an optimal drawing of $G$ in which $e$ and $f$
are drawn ``closely together'', meaning that they cross the same other edges
in the same order.
\end{proposition}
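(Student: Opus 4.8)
The plan is to use the classical \emph{rerouting} (crossing-exchange) argument for part~a), and a \emph{thin-tube redrawing} for part~b); no new idea beyond these standard local moves is needed, so the whole difficulty lies in careful topological bookkeeping.

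For part~a) I would start from an optimal drawing $D$. To show that no two edges cross twice, suppose edges $e,f$ cross at least twice, and pick two crossing points $p,q$ that are consecutive along $e$, so that the subarc $\alpha\subseteq e$ between $p$ and $q$ meets $f$ only at $p$ and $q$; let $\beta$ be either subarc of $f$ between $p$ and $q$. I would reroute $e$ by replacing $\alpha$ with a curve following $\beta$ very closely, and simultaneously reroute $f$ by replacing $\beta$ with a curve following $\alpha$ very closely. Inspecting the rotation at $p$ (and symmetrically at $q$) shows that the two rerouted curves no longer interleave there, so a tiny local perturbation removes the intersections at $p$ and $q$; meanwhile every crossing of $e$ or $f$ with a third edge is simply inherited by whichever rerouted curve now carries the relevant arc, so no crossing is created. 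Hence the new drawing has at least two fewer crossings, contradicting optimality. For edges $e=uv$ and $f=uw$ sharing the end $u$: by the previous paragraph they cross at most once, and if they cross once at $p$ the same exchange applied to the arcs $e[u,p]$ and $f[u,p]$ (which meet only at $u$ and $p$, hence bound a bigon) makes the two curves touch but not interleave at both $u$ and $p$, so a perturbation removes that single crossing, again contradicting optimality.

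For part~b) I would again fix an optimal drawing $D$ and assume, without loss of generality, that $e$ carries at most as many crossings in $D$ as $f$; by part~a) the edges $e$ and $f$ do not cross each other, and every other edge crosses $e$ at most once. Then I would delete $f$ and redraw it as a curve $f'$ contained in a sufficiently thin rectangular neighbourhood (``tube'') of $e$, emanating at $u$ and arriving at $v$ immediately next to $e$ in the respective rotations. With the tube thin enough, the only edges meeting it away from $u$ and $v$ are exactly those crossing $e$; and since edges incident to $u$ or $v$ do not cross $e$ (part~a)), putting $f'$ adjacent to $e$ in the two rotations creates no crossing at $u$ or $v$ either. Thus $f'$ crosses precisely the edges that cross $e$, each exactly once and in the same cyclic order, and nothing else. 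Consequently the number of crossings of the new drawing equals that of $D$, minus the crossings on $f$, plus the crossings on $e$, which is at most the crossing number of $D$; so the new drawing is optimal and has $e$ and $f'$ drawn closely together, as required.

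The step I expect to be the main (and really only) obstacle is the routine but fiddly topology: in a), checking that the exchange neither produces crossings with third edges nor leaves an interleaving at $p$ or $q$; and in b), choosing the tube thin enough that it captures exactly the crossings of $e$ and is not pierced by unrelated edges, while keeping $f'$ parallel to $e$ along its whole length.
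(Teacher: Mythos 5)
The paper states Proposition~\ref{pro:folklorecr} as folklore and gives no proof of it, so there is nothing to compare against; your argument is the standard one (arc-exchange at a bigon for part~a), thin-tube redrawing of the lighter-crossed parallel edge for part~b)) and it is correct. The only point you gloss over is that when $e$ and $f$ cross more than twice, the rerouted curve $(e\setminus\alpha)\cup\beta'$ may self-intersect (since $\beta$ can meet $e$ outside $\alpha$); this is repaired in the usual way by shortcutting self-intersections, which never increases the crossing count, so it is a cosmetic rather than a genuine gap.
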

\noindent
In view of Proposition~\ref{pro:folklorecr}(b), we adopt the following view
of multiple edges:
If the vertices $u$ and $v$ are joined by $p$ parallel edges, we view all $p$ of them as one edge $f$ of {\em weight~$p$}
(cf.~Figure~\ref{fig:main13} in which the weights $>1$ are shown as the edge labels).
If (multiple) edges $f$ and $g$ of weights $p$ and $q$ cross each other,
then their crossing naturally contributes the amount of $p\cdot q$ 
to the total number of crossings.
This folklore observation greatly simplifies the analysis of our
graph $G_{13}$ which contains many multiple edges.

We now, with help of the previous observations, finish the first step:
\begin{lemma}\label{lem:redbluecr}
There exists an optimal drawing of the graph $G_{13}$ in which no red edge
crosses a red or a blue edge, or~$\crg(G_{13})\geq13$.
\end{lemma}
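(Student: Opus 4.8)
The plan is to start from an optimal drawing $D$ of $G_{13}$ and, assuming $\crg(G_{13})\le 12$, argue that the red edges form such a heavy planar structure that no crossing on them can be afforded; then I rewire $D$ locally to obtain an optimal drawing with no red--red and no red--blue crossings. First I would record the weight budget: the red path has consecutive weights $4,3,4,5,7,7,7,5,4,3,4$, and the two blue ``short'' edges $u_2v_3$, $u_3v_2$ have weight~$1$ while the two blue ``long'' edges $u_1v_4$, $u_4v_1$ have weight~$2$. So any single crossing of a red edge with the weight-$7$ middle red edge $x_1x_2$ (or with another weight-$\ge 3$ red edge, or with a weight-$2$ blue edge) already costs a sizeable chunk of~$12$; the point will be that even the cheapest conceivable red crossing, combined with the unavoidable blue--blue crossings forced by the rest of the graph, overshoots~$12$, contradicting optimality — unless we are already in the second alternative $\crg(G_{13})\ge 13$.

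The key steps, in order: (i) Using Proposition~\ref{pro:folklorecr}, treat each parallel class as one weighted edge, so no two edges cross more than once and adjacent edges do not cross. (ii) Observe that the red edges together with the green edges $x_1u_5$, $x_2v_5$ and the gray ``gadget'' edges incident to $u_5,v_5,x_1,x_2$ span a subgraph $H$ that is planar, and in fact the red path is a path whose internal vertices $u_4,u_3,u_2,u_1,v_1,v_2,v_3,v_4$ have all their non-red incident edges being blue; I would first handle red--red crossings by noting two red edges cross at most once, and a red--red crossing on the path can always be removed by a ``swap'' along the path (rerouting one red segment closely parallel to the portion of the path between the two crossing edges) without increasing the count, because the path is $2$-connected to the rest only through its endpoints and the blue/green attachments, so the rerouting changes only which blue edges are crossed, and by the weight bound this cannot be forced. (iii) For red--blue crossings: a blue edge $u_iv_j$ enters the ``interior'' region bounded by the red path and the outer green/gray structure; if it crosses a red edge, I slide the crossing point along the red edge toward the nearer endpoint $u_k$ or $v_k$ of that red edge, and then past that vertex along the path, repeating until the blue edge no longer crosses any red edge — each such slide replaces a red crossing of weight $\ge 3$ (or $\ge 7$ in the middle) by crossings with the blue edges attached at the vertices it passes, of total weight at most the number of blue edges, i.e.\ at most $1+1+2+2=6$ but actually far fewer are crossed locally; quantifying this carefully shows the move never increases the weighted crossing count, and a global minimality/extremality choice (minimizing, among optimal drawings, the total weight of crossings incident to red edges) forces that count to zero. (iv) Finally, verify the moves do not create new red--red crossings, by performing them in an order that processes blue edges one at a time and red--red pairs before blue ones.

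The main obstacle I anticipate is step~(iii): showing the ``slide the crossing off the red path'' operation is globally valid, i.e.\ that pushing a blue edge past a red vertex $u_k$ does not force it to cross too many of the edges attached at $u_k$ (red neighbours of weight $\ge 3$, or the heavy green edge at $u_5$), and that iterating it terminates. The heavy middle edge $x_1x_2$ of weight $7$ is the delicate case — a blue edge forced to cross it would already cost $7$, so the real content is proving that, whenever a blue edge is drawn crossing $x_1x_2$, one can reroute it around one end of that edge through the green/gray structure at a total extra cost of at most $7$ (using that the gray wedge gadgets are small and planar), which either yields a no-red-crossing optimal drawing or pushes the count to $13$. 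I would organize this as a short case analysis on which red edge is crossed, bounding in each case the cost of the detour by the planarity of $H$ and the explicit small weights of the gray gadget, and conclude by the extremal choice of~$D$.
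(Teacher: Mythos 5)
Your plan follows the same general strategy as the paper (weighted edges via Proposition~\ref{pro:folklorecr}, then local rerouting to eliminate red crossings), but the two hardest steps are exactly the ones you leave unproven, and the routes you sketch for them do not obviously work. For red--red crossings, your ``swap a red segment parallel to the path'' move is not justified: rerouting, say, $u_4u_3$ (weight $3$) alongside the subpath between the two crossing edges would make it cross every edge currently crossing that subpath and every blue edge attached to the intermediate vertices, each at cost $3$, and nothing in your weight budget rules this out. The paper avoids any such move: it observes that by Proposition~\ref{pro:folklorecr} a red--red crossing can only be between an edge of $\{u_5u_4,u_4u_3,u_3u_2\}$ and one of $\{v_5v_4,v_4v_3,v_3v_2\}$, costing $9$ or $12$, and then extracts \emph{four further forced crossings} from the Jordan curve theorem applied to the two pairs of edge-disjoint cycles $C_1,C_2$ (through $x_1$ and through $x_1,w_1^1$) versus $C_1',C_2'$ --- these cycles cross transversely once at the red--red crossing, hence must cross again. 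That yields $9+4\ge 13$ outright, landing in the second alternative of the lemma. This Jordan-curve step is the key idea your proposal is missing; a crossing of cost $9$ alone leaves room for an optimal drawing with $\le 12$ crossings.

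For the red--blue case on the weight-$7$ edge $x_1x_2$, your proposed detour ``around one end through the green/gray structure at total extra cost at most $7$'' is neither verified nor sufficient: a detour of cost exactly $7$ does not contradict optimality, and routing a blue edge around $x_1$ or $x_2$ through the wedges runs into the weight-$7$ red edges $u_1x_1$ or $x_2v_1$ and the weight-$2$ gray edges, so the bound of $7$ is far from obvious. The paper's detour is different and cheaper: after noting that a weight-$2$ blue edge crossing $x_1x_2$ already gives $14$ crossings, the only candidate is (up to symmetry) $e=u_3v_2$, which is rerouted along the path $(u_3,u_4,v_1,v_2)$ --- i.e., across the \emph{blue} edge $u_4v_1$ rather than through the gray gadget --- at cost at most $p+2\le 4$, where the alternative $p\ge 3$ is excluded because each edge crossing that path costs at least $2$ and would already force $7+2p\ge 13$ crossings. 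The remaining slides past $v_5,v_4,\dots$ that you defer to ``careful quantification'' do require explicit per-vertex accounting (e.g., pulling $v_5$ across $e$ trades a weight-$4$ red crossing for exactly the weight-$4$ bundle of non-red edges at $v_5$), but these are routine once the two steps above are in place. As written, your proposal does not constitute a proof.
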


Note that the drawing in Fig.~\ref{fig:main13}(a) is of the kind anticipated
by Lemma~\ref{lem:redbluecr} (and there are also other drawings of $G_{13}$ with 
$13$ crossings of this kind, which can be very different from that of
Fig.~\ref{fig:main13}(a)), but we do not known yet at this stage whether they are optimal.

\begin{proof}
Let $D$ be an optimal drawing of $G_{13}$. 
If two red edges $e$ and~$e'$ of $G_{13}$, viewed as weighted edges by Proposition~\ref{pro:folklorecr}(b), cross in $D$, 
then the weights of $e$ and $e'$ are $3$ and $3$, or $3$ and~$4$ (see Figure~\ref{fig:main13}),
and this crossing alone accounts for at least $9$ crossings in~$D$.
Since the edge $u_4u_3$ cannot cross $u_3u_2$ or $u_5u_4$ by Proposition~\ref{pro:folklorecr}(a),
and by symmetry, we have that $e\in\{u_5u_4,u_4u_3,u_3u_2\}$ and $e'\in\{v_5v_4,v_4v_3,v_3v_2\}$ (and not both of weight~$4$).
Consi\-der two edge-disjoint simple (weight $1$) cycles $C_1=(u_5,u_4,u_3,u_2,u_1,x_1)$ and $C_2=(u_5,u_4,u_3,u_2,u_1,x_1,w_1^1)$,
and symmetric $C_1'=(v_5,v_4,v_3,v_2,v_1,x_1)$, $C_2'=(v_5,v_4,v_3,v_2,v_1,x_1,w_1^1)$.
Since $C_1,C_2$ transversely cross $C_1',C_2'$, they must cross a second time
by the Jordan curve theorem, giving additional $2\cdot2=4$ crossings.
Hence $D$ had at least $9+4=13$ crossings, and~$\crg(G_{13})\geq13$.

It remains to get rid of possible red--blue crossings in~$D$.
Assume first that the red edge $x_1x_2$ (weight~$7$) is crossed by a blue edge~$e$.
If $e$ is of weight $2$, then $D$ has~$14$ cros\-sings.
Hence, up to symmetry, $e=u_3v_2$.
We redraw $e$ closely along the path $P=(u_3,u_4,v_1,v_2)$,
saving $7$ crossings on $x_1x_2$, and newly crossing $u_4v_1$ (possibly)
and~$p$~edges which cross $P$ in $D$.
If $p\geq3$, there are already $7+p\cdot2\geq13$ crossings~in~$D$.
Otherwise, redrawing $e$ makes only $p+2\leq4$ new crossings, contradicting optimality of~$D$.

In the rest, we iteratively remove unwanted red--blue crossings by redrawing 
the involved blue edge, while not increasing the total number of crossings. 
Since we always remove some red--blue crossing,
this will eventually lead to a desired optimal drawing of~$G_{13}$.

Assume in $D$ that a red edge $v_3v_4$ (or, symmetrically $u_4u_3$) is crossed by
a blue edge~$e$, and this is the blue crossing on $v_3v_4$ closest to~$v_3$.
In this case, $e$ has one end $w\in\{v_1,v_2\}$.
Instead of crossing $v_3v_4$ (saving $3$ crossings),
we redraw part of $e$ closely along the path $(v_3,v_2,v_1)$ to the end~$w$,
while possibly crossing blue $v_3u_2$, $v_2u_3$ (if $w=v_1$), 
and other $r$ edges which in $D$ cross red $v_3v_2$ or $v_2v_1$.
If $r\geq3$, we already had $r\cdot4+3\geq15$ crossings in~$D$.
If $r\geq2$ and $e=v_1u_4$ (weight~$2$), we had
$r\cdot4+3\cdot2\geq14$ crossings~in~$D$.~%
Otherwise, redrawn $e$ crosses at most $3$ new edges, and so we have
no more crossings than in~$D$.

Finally, if the red edge $v_4v_5$ is crossed by blue~$e$ in~$D$, 
then we ``pull'' the end $v_5$ along $v_4v_5$ towards $v_4$ across $e$.
This redrawing replaces the crossing of $e$ with $v_4v_5$ of weight~$4$
by $4$ crossings with the non-red edges of~$v_5$,
so again no more crossings than in~$D$.
If red $v_3v_2$ ($v_2v_1$, or $v_1x_2$) is crossed by blue $e$ in $D$,
we similarly ``pull'' along the red edges $v_3$ towards $v_2$
($v_2,v_3$ towards $v_1$, or $v_1,v_2,v_3$ towards $x_2$).
This replaces the original crossing of $e$ with red by crossings of $e$
with $v_3v_4$ and $v_3u_2$ (plus $v_2u_3$ or plus $v_2u_3,v_1u_4$),
but the total number of crossings stays the same as in~$D$.
Then we redraw the crossing of $e$ and $v_3v_4$ as above.
\qed\end{proof}

\subsection{Counting selected crossings in a drawing of $G_{13}$}

In the second step, we introduce two additional sorts of edges of~$G_{13}$.
The edges $u_5x_1$ and $v_5x_2$ are called {\em green},
and all remaining edges of~$G_{13}$ (i.e., not blue or red or green) are declared {\em gray}.
Let $G_0$ denote the subgraph of $G_{13}$ formed by all red and gray edges
and the incident vertices.
Let $R$ denote the (multi)path of all red edges.


\begin{lemma}\label{lem:redgrayplanar}
Let $D$ be an optimal drawing of the graph $G_{13}$ as claimed by Lemma~\ref{lem:redbluecr}.
If the subdrawing of $G_0$ within $D$ is planar, then $D$ has at least $13$ crossings.
\end{lemma}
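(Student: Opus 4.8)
The plan is to exploit the planarity of $G_0$ inside $D$ as a rigid ``skeleton'' and then account for crossings created by the blue and green edges that must be added back.

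First I would analyze the planar subdrawing of $G_0$. Note that $G_0$ consists of the red multipath $R$ on $(u_5,u_4,u_3,u_2,u_1,x_1,x_2,v_1,v_2,v_3,v_4,v_5)$ together with the gray edges — the two ``wedge'' gadgets attached at $u_5,x_1$ and at $x_2,v_5$ (through the vertices $w_1^1,w_4^1,w_2^2$ on the left and $w_1^2,w_4^2,w_2^1$ on the right). Since by Lemma~\ref{lem:redbluecr} no red edge crosses a red or blue edge, and here we are assuming $G_0$ is drawn planarly, the red path and the gray gadgets impose a fixed combinatorial embedding up to reflection and choice of outer face. The key structural fact I would establish is that in any planar drawing of $G_0$, the cyclic structure forces the vertices $u_5$ and $v_5$ (equivalently $w_2^2$ and $w_2^1$, the tips of the wedges) to lie in a ``protected'' position: the gray gadget together with $R$ separates the plane so that any curve from $u_5$ to $v_j$ (a blue edge) must cross either $R$ or the gray edges a certain number of times. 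Concretely, the two gray cycles through $w_1^1,w_4^1$ and $x_1$, and through $w_1^2,w_4^2$ and $x_2$, each bound a region, and the red path threads between them.

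Next I would add back the blue edges $u_1v_4, u_2v_3, u_3v_2, u_4v_1$ (weights $2,1,1,2$) and the green edges $u_5x_1, v_5x_2$, and count the forced crossings. Since red edges are crossing-free by hypothesis, every blue edge from a $u_i$ to a $v_j$ must get from the ``left side'' of the skeleton to the ``right side'' entirely through non-red edges; I would argue this forces each blue edge to cross the gray gadget or to cross other blue edges, and I would set up a planarity/parity argument (Jordan curve theorem applied to suitable cycles in $G_0$, analogous to the $C_1,C_2,C_1',C_2'$ argument in the proof of Lemma~\ref{lem:redbluecr}) showing that the total contribution is at least $13$. The weighting is what makes this tight: a blue edge of weight $2$ crossing a gray edge of weight $2$ already contributes $4$, so only a few such crossings suffice. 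I expect the bookkeeping to reduce to: the four blue edges, viewed as four disjoint curves all having to ``cross over'' the frame, contribute crossings that sum to at least $13$, possibly by showing that at least one of the wedge gadgets must be crossed with total weight $\geq$ some bound while the blue--blue crossings among $u_1v_4,\dots,u_4v_1$ contribute the rest.

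The main obstacle I anticipate is pinning down exactly which faces of the planar $G_0$-subdrawing the endpoints $u_1,u_2,u_3,u_4$ and $v_1,v_2,v_3,v_4$ lie in, and ruling out ``cheap'' routings where a blue edge sneaks through the interior of a wedge gadget cheaply — the gray edges have weight $2$ on the spokes but weight $1$ on the middle links, so one must be careful that a blue edge cannot exploit the weight-$1$ gray edges. Handling this cleanly will likely require a short case analysis on the rotation at $x_1$ and $x_2$ (which determines how the wedge gadgets sit relative to $R$) together with the observation that the green edges $u_5x_1$ and $v_5x_2$ further constrain the embedding: $u_5$ and $v_5$ are already ``used up'' connecting into the gadgets, so the green edges close off an extra cycle whose two sides the blue edges must respect. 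Once the embedding is fixed, the crossing count should follow from repeated application of the Jordan curve theorem to a small family of cycles, mirroring the technique already used for Lemma~\ref{lem:redbluecr}.
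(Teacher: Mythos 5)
Your overall strategy is the same as the paper's: treat the planar subdrawing of $G_0$ as a rigid skeleton with the red path $R$ drawn as a line, classify the blue edges by the sides of $R$ to which they attach, and extract forced crossings from the Jordan curve theorem. But what you have written is a plan, not a proof, and the parts you defer to ``bookkeeping'' are exactly where the content of the lemma lies. Three things are missing. First, $G_0$ does \emph{not} have a unique embedding up to reflection and outer face: there are two inequivalent planar drawings (one wedge can be flipped to the other side of the spine), and they require genuinely different counts --- in the second one a top or bottom blue edge is only forced to cross $2$ gray edges rather than $4$, and one reaches only $12$ gray crossings plus one extra blue--blue crossing. Second, you never establish the quantitative per-edge bounds: in the first embedding a switching blue edge must cross at least $3$ and a top blue edge at least $4$ \emph{edge-disjoint gray paths} (e.g.\ $(u_5,w_1^1,x_1)$, $(u_5,w_1^1,w_4^1,x_1)$, $(v_5,w_4^2,x_2)$, $(v_5,w_4^2,w_1^2,x_2)$), and each green edge must cross at least $\min(k,\ell)$ blue or red edges, where $k,\ell$ are the weights on the two sides of $R$, because there are that many edge-disjoint cycles formed by one top and one bottom blue edge plus red arcs separating $u_5$ from $x_1$ and $v_5$ from $x_2$. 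Counting edge-disjoint paths rather than individual gray edges is precisely what defuses your own worry about blue edges ``sneaking'' through the weight-$1$ gray links, and you do not resolve that worry.

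Third, and most importantly, the final case analysis over how the total blue weight $6$ is split among top/bottom/switching cannot be waved away: several cases are exactly tight at $13$. For instance, all six blue units on one side give $\binom{6}{2}-2=13$ interleaving crossings; bottom weight $3$ gives $2+9+2=13$; bottom weight $4$ gives $5+6+2=13$. A slightly weaker bound in any one of the three ingredients, or an overlooked distribution, would leave you at $12$ and the lemma would fail. So while your framework is sound and matches the paper's, the proof is incomplete until you pin down the two embeddings of $G_0$, prove the three counting claims, and check every distribution of the blue edges.
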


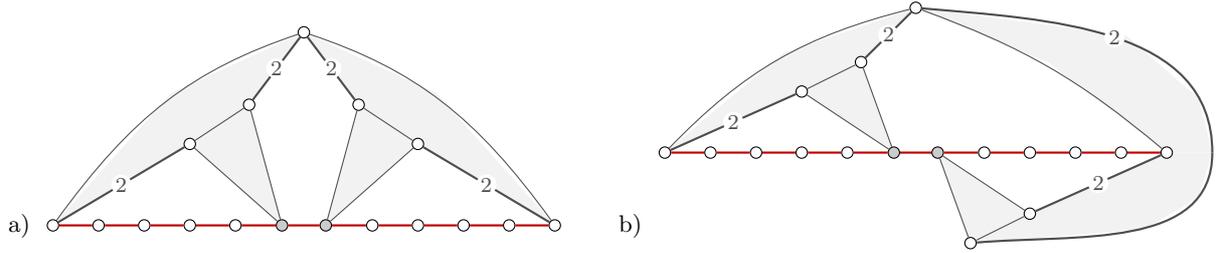
\begin{figure}
 \centering
 a)~
 \begin{tikzpicture}[xscale=0.6,yscale=0.8]\scriptsize
  \tikzset{every node/.style={tiny}}
  \node[fill=black!20!white] (x) at (-0.98,0) {};
  \node[fill=black!20!white] (xx) at (-0.02,0) {};
  \node (u1) at (-2,0) {};
  \node (u2) at (-3,0) {};
  \node (u3) at (-4,0) {};
  \node (u4) at (-5,0) {};
  \node (u5) at (-6,0) {};
  \node (v1) at (1,0) {};
  \node (v2) at (2,0) {};
  \node (v3) at (3,0) {};
  \node (v4) at (4,0) {};
  \node (v5) at (5,0) {};
  \node (h2) at (-3,1.35) {};
  \node (h3) at (-1.7,2) {};
  \node (k1) at (-0.5,3.2) {};
  \node (k2) at (0.7,2) {};
  \node (k3) at (2,1.35) {};
  \tikzset{every node/.style={edge-label}}
  \draw[red!75!black,thick] (u5) -- (u4) -- (u3)
   -- (u2) -- (u1) -- (x) -- (xx)
   -- (v1) -- (v2) -- (v3) -- (v4) -- (v5);
  \draw[black!70!white] (u5) edge[thick] node {2} (h2) (h2) -- (h3) (h3) edge[thick] node {2} (k1)
     (k1) edge[bend right=16] (u5)
   (k1) edge[thick] node {2} (k2) (k2) -- (k3) (k3) edge[thick] node {2} (v5)
     (v5) edge[bend right=16] (k1)
   (h2) -- (x) -- (h3)  (k2) -- (xx) -- (k3)
  ;
  \begin{scope}[on background layer]
    \fill[color=black!5!white] (x.center) -- (h2.center) -- (u5.center)
      -- (-3.4,2.2) -- (k1.center) -- (h3.center) -- cycle;
    \fill[color=black!5!white] (xx.center) -- (k3.center) -- (v5.center)
      -- (2.4,2.2) -- (k1.center) -- (k2.center) -- cycle;
  \end{scope}
 \end{tikzpicture}
\qquad
 b)%
 \begin{tikzpicture}[xscale=0.6,yscale=0.6]\scriptsize
  \path[use as bounding box] (-6.5,-1.75) rectangle (6,3.3);
  \tikzset{every node/.style={tiny}}
  \node[fill=black!20!white] (x) at (-0.98,0) {};
  \node[fill=black!20!white] (xx) at (-0.02,0) {};
  \node (u1) at (-2,0) {};
  \node (u2) at (-3,0) {};
  \node (u3) at (-4,0) {};
  \node (u4) at (-5,0) {};
  \node (u5) at (-6,0) {};
  \node (v1) at (1,0) {};
  \node (v2) at (2,0) {};
  \node (v3) at (3,0) {};
  \node (v4) at (4,0) {};
  \node (v5) at (5,0) {};
  \node (h2) at (-3,1.35) {};
  \node (h3) at (-1.7,2) {};
  \node (k1) at (-0.5,3.2) {};
  \node (k2) at (0.7,-2) {};
  \node (k3) at (2,-1.35) {};
  \tikzset{every node/.style={edge-label}}
  \draw[red!75!black,thick] (u5) -- (u4) -- (u3)
   -- (u2) -- (u1) -- (x) -- (xx)
   -- (v1) -- (v2) -- (v3) -- (v4) -- (v5);
  \draw[black!70!white] (u5) edge[thick] node {2} (h2) (h2) -- (h3) (h3) edge[thick] node {2} (k1)
     (k1) edge[bend right=16] (u5)
   (k1) edge[thick,out=-5,in=90] node {2} (6,0) (6,0)
edge[thick,out=-90,in=5] (k2)
     (k2) -- (k3) (k3) edge[thick] node {2} (v5) (v5) edge[bend right=10] (k1)
   (h2) -- (x) -- (h3)  (k2) -- (xx) -- (k3)
  ;
  \begin{scope}[on background layer]
    \fill[color=black!5!white] (x.center) -- (h2.center) -- (u5.center)
      -- (-3.4,2.2) -- (k1.center) -- (h3.center) -- cycle;
    \fill[color=black!5!white] (xx.center) -- (k3.center) -- (v5.center)
      -- (6,0) -- (5.7,-1) -- (4.5,-1.7) -- (k2.center) -- cycle;
    \fill[color=black!5!white] (v5.center)
      -- (6,0) -- (5.7,1.2) -- (4.1,2.6) -- (k1.center) -- (2.4,2) -- cycle;
  \end{scope}
 \end{tikzpicture}

 \caption{Schematically, the two non-equivalent planar drawings of the subgraph
   $G_0$ (red and gray) of~$G_{13}$. This is used in the proof of Lemma~\ref{lem:redgrayplanar}.}
 \label{fig:tworedgray}
\end{figure}

\begin{proof}
There are only two non-equivalent planar drawings of $G_0$, as in Fig.~\ref{fig:tworedgray}.
We picture them with the red path $R$ drawn as a horizontal line (and we recall that $R$ is not crossed by blue edges).
We call a blue edge of $G_{13}$ {\em bottom} if it is attached to $R$ from below
at both ends, and {\em top} if attached from above at both ends.
A blue edge is {\em switching} if it is neither top nor bottom.
Note that we have only crossings involving a green edge, or
crossings of a blue edge with a blue~or~gray~edge.

Consider the drawing of $G_0$ in Fig.~\ref{fig:tworedgray}(a). By the Jordan curve theorem, we first deduce:
\begin{enumerate}[(I)]
\item If a blue edge $e$ is bottom (top), and a blue edge $e'\not=e$ attaches to $R$ from below (from above)
at its end which is between the ends of $e$ on~$R$, then $e$ and $e'$ cross.
In particular, two bottom (two top) blue edges always cross each other.
\item A top blue edge crosses at least $4$ gray edges, and a switching one at least~$3$ gray edges.
\item If there is weight $k$ of top blue edges and weight $\ell$ of bottom blue edges, 
then each of the two green edges has at least $\min(k,\ell)\leq3$ crossings with blue or red edges.
\item If there is weight $k$ of switching blue edges and weight $\ell$ of bottom blue edges, 
then the two green edges together have at least $\min(k,\ell)$ crossings with blue or red edges.
\end{enumerate}

We refer to Fig.~\ref{fig:main13}(a).
While a proof of (I) is straightforward, we provide the details for the claims (II) to (IV).

In (II); for a top edge $f$ (which cannot cross~$R$) we argue that $f$ must
cross the four edge-disjoint gray paths $(u_5,w_1^1,x_1)$,
$(u_5,w_1^1,w_4^1,x_1)$, $(v_5,w_4^2,x_2)$, and $(v_5,w_4^2,w_1^2,x_2)$.
For a switching edge $f$, which up to symmetry attaches to $u_i$ from above,
we argue that $f$ must cross the three edge-disjoint gray paths $(u_5,w_1^1,x_1)$,                  
$(u_5,w_1^1,w_4^1,x_1)$ and $(u_5,w_2^2,w_1^2,x_2)$.

In (III); considering (possibly parallel) $k$ top and $\ell$ bottom blue edges, 
the drawings of each cycle consisting of one top, one bottom, and between two
and six interconnecting red edges separates $x_1$ from $u_5$ and $x_2$ from $v_5$, 
and so there must be an additional crossing on each of the green edges $u_5x_1$ and $x_2v_5$.
There are $\min(k,\ell)\leq3$ such edge-disjoint cycles, and hence the
claimed minimum number of crossings with each green edge.

In (IV); we define the $\min(k,\ell)$ edge-disjoint cycles analogously as in (III) (but this time with possible self-crossings); 
now, each of them is guaranteed to separate only one of $x_1$ from $u_5$ or $x_2$ from $v_5$,
and so we get a lesser conclusion of (IV) the same way.

\smallskip
Next, we summarize the case of $G_0$ as in Fig.~\ref{fig:tworedgray}(a) according to the number of bottom edges.
If no blue edge is bottom, then we have at least $6\cdot3=18$ crossings by (II).
If the combined weight of the bottom edges is at most $2$,
then we have at least $4\cdot3=12$ crossings by (II), plus a crossing by (III),(IV).
If the weight of the bottom edges equals $3$, then we have $2$ crossings by (I), 
at least $3\cdot3=9$ crossings by (II), and at least $2$ crossings by (III), summing to~$\geq13$.
If the weight of the bottom edges equals $4$, then we have at least $5$ crossings by (I), 
at least $2\cdot3=6$ crossings by (II), and at least $2$ crossings by (III), summing again to at least~$13$.

We are left with the cases in which the weight of the bottom edges is $6$ or~$5$.
If all blue edges are bottom, they pairwise give desired ${6\choose2}-2=13$ crossings by~(I).
Otherwise, one of $u_3v_2,u_2v_3$, say $e$, is top or switching and all other blue edges are bottom.
If $e$ is top, then we get $8$ crossings by (I), $4$ crossings by (II) and $2$ crossings by (III), together $8+4+2=14$ crossings.
If $e$ is switching, then we get at least $8+2=10$ crossings by (I), $3$ crossings by (II) and $1$ crossing by (III), together $10+3+1=14$ again.

\medskip
To finish, we consider the other drawing of $G_0$ in Fig.~\ref{fig:tworedgray}(b).
Now a top or bottom blue edge must cross at least $2$ gray edges, and a switching blue edge at least $3$ gray edges.
Hence we get desired $13$ crossings, except when no blue edge is switching 
and we have the minimum~of~$6\cdot2=12$ blue-gray crossings.
Though, in the latter case we get another blue crossing as~in~(I).
\qed\end{proof}

We now focus on the following selected crossings in a drawing of $G_{13}$:
a {\em refined crossing} is one in which a blue edge crosses a gray or green edge, or two blue edges cross each other.
This will help the remaining analysis.

\begin{lemma}\label{lem:redgraynonplanar}
Let $D$ be a drawing of the graph $G_{13}$ as claimed by Lemma~\ref{lem:redbluecr}.
If $D$ has less than $13$ crossings and no red edge is crossed in $D$,
then $D$ contains $12$ refined crossings.
\end{lemma}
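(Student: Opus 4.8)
The plan is to reduce to the situation analyzed in Lemma~\ref{lem:redgrayplanar} by "cleaning up" crossings among non-refined edges, at the cost of at most the single crossing we are allowed to sacrifice. We are given a drawing $D$ of $G_{13}$ with at most $12$ crossings in which no red edge is crossed at all. Since no red edge is crossed, the two green edges and the gray edges are the only edges that can cross among themselves without being counted as refined; in particular, a crossing of $D$ fails to be refined exactly when it is a green--green, green--gray, or gray--gray crossing. So I want to argue that $D$ has at most one such non-refined crossing, which combined with $\crg(G_{13})\ge 13$ (the conclusion of the earlier lemmas in the two cases) forces the refined count to be at least $12$.

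First I would handle the subgraph $G_0$ (red and gray edges). Since no red edge is crossed in $D$, the only crossings internal to $G_0$ are gray--gray crossings. If the subdrawing of $G_0$ in $D$ is planar, then by Lemma~\ref{lem:redgrayplanar} $D$ has at least $13$ crossings, contradicting our assumption of at most $12$; hence $G_0$ is drawn non-planarly, so there is at least one gray--gray crossing. The key structural point is that $G_0$ is "almost planar": I would check (using the explicit structure of $G_0$ from Fig.~\ref{fig:tworedgray}, essentially two nested "theta-like" pieces glued along the red path) that deleting any single gray edge makes $G_0$ planar, i.e. $G_0$ is $1$-crossing-critical up to the already-forbidden red crossings — or more directly, that any drawing of $G_0$ with the red path uncrossed and fewer than, say, two gray--gray crossings is determined; in fact the cleanest route is: if $D$ restricted to $G_0$ had two or more gray--gray crossings, remove one gray edge involved in a crossing to get $\crg(G_0-e)\le 10$ in that subdrawing while $G_0-e$ is planar — contradiction — so there is \emph{exactly one} gray--gray crossing in $D$.

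Next I would rule out the other non-refined crossings, namely any crossing incident to a green edge and a green or gray edge. Here I would use a local rerouting argument in the spirit of the proof of Lemma~\ref{lem:redbluecr}: the green edge $u_5x_1$ has its two ends $u_5$ and $x_1$ joined in $G_{13}$ by short gray paths through $w_1^1$, so a green--gray crossing can be pushed off by rerouting the green edge closely along a gray path, and a green--green crossing (the only possibility being $u_5x_1$ with $v_5x_2$) can be handled by pulling an endpoint along a red edge exactly as in the last paragraph of the proof of Lemma~\ref{lem:redbluecr}, converting it into crossings with non-red edges of that endpoint — none of which increases the crossing count and each of which strictly decreases the number of non-refined crossings, so after finitely many steps we reach a drawing $D'$ of $G_{13}$, still with no red edge crossed and at most $12$ crossings, whose only non-refined crossing is the single forced gray--gray one. (I must be a little careful that rerouting a green edge does not create a \emph{red} crossing; since red edges are uncrossed in $D$ and the rerouting hugs gray paths that avoid the interior of red edges, this is fine, but it is the step I would write out most carefully.)

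Finally, in $D'$ every crossing except one is refined, and $D'$ is a drawing of $G_{13}$ with no red crossings, so either its $G_0$-subdrawing is planar — impossible by Lemma~\ref{lem:redgrayplanar} together with the at-most-$12$ bound — or, using the non-planar $G_0$-subdrawing, the arguments of Lemma~\ref{lem:redgrayplanar}'s Fig.~\ref{fig:tworedgray}(b) case give at least $13$ crossings unless we are in the tight configuration, which still yields $\ge 13$; in all cases $D'$ has $\ge 13$ crossings, hence exactly one of them non-refined means $D'$ — and therefore $D$, since rerouting preserved crossing count — has at least $12$ refined crossings. The main obstacle I anticipate is the bookkeeping in the rerouting step: making sure each reroute of a green (or, if needed, gray) edge can be chosen to \emph{strictly} reduce the number of non-refined crossings without ever touching a red edge and without increasing the total, so that the process terminates at a drawing in the exact form Lemma~\ref{lem:redgrayplanar} can be applied to.
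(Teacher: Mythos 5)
Your plan never actually produces the $12$ refined crossings; it tries to infer them as ``total crossings minus non-refined ones,'' and both halves of that subtraction are unsupported. To get ``total $\ge 13$'' you invoke either $\crg(G_{13})\ge 13$ --- which is circular, since that is precisely Theorem~\ref{thm:main13alt}, whose proof depends on this lemma --- or ``the arguments of Lemma~\ref{lem:redgrayplanar}'s Fig.~\ref{fig:tworedgray}(b) case.'' But both cases of Lemma~\ref{lem:redgrayplanar} are \emph{planar} embeddings of $G_0$; its counting claims (II) and (III) are derived from the Jordan curve theorem applied to those two specific embeddings, and they are simply unavailable once $G_0$ is drawn non-planarly. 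The whole reason Lemma~\ref{lem:redgraynonplanar} exists is to cover exactly that complementary case, and the paper does so by choosing six edge-disjoint gray/green paths $P_1,P_2,Q_1,Q_1',Q_2,Q_2'$ whose separation properties hold for \emph{any} drawing in which the red path is uncrossed (not just the two planar ones), deriving the analogues (I$'$)--(III$'$), and then exhausting all top/bottom/switching distributions of the blue edges in Table~\ref{tbl2} to find $12$ refined crossings directly in every case. Nothing in your write-up replaces that case analysis.

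Two further steps are also broken. First, ``exactly one gray--gray crossing'': from the planarity of $G_0-e$ you cannot conclude that the given subdrawing of $G_0-e$ is crossing-free --- a planar graph can be drawn with arbitrarily many crossings --- so no contradiction arises and the bound of one non-refined crossing does not follow. Second, the transfer from $D'$ back to $D$: your reroutings of green edges along gray paths preserve the total number of crossings but can convert refined crossings into non-refined ones and vice versa (a blue edge crossing the gray path now also crosses the rerouted green edge), so ``$D'$ has $12$ refined crossings'' does not imply the same for $D$, which is what the lemma asserts and what the proof of Theorem~\ref{thm:main13alt} consumes.
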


\begin{proof}
We may again picture the red path $R$ as a straight horizontal line,
and use the terms {\em top/bottom/switching}\/ for blue edges as in the proof
of Lemma~\ref{lem:redgrayplanar}.
Then we consider the following six pairwise edge-disjoint gray and green paths:
$P_1$ (of length~$2$) and $P_2$ (of length~$6$) join $u_5$ to $v_5$ via $w_2^2$,~
$Q_1$ and $Q_1'$ are formed by the edges $u_5x_1$ and $v_5x_2$,
and $Q_2,Q_2'$ (of length~$2$) join $u_5$ to $x_1$ via $w_1^1$
and $v_5$ to $x_2$ via $w_4^2$.
Using the Jordan curve theorem (cf.~Fig.~\ref{fig:main13}), we deduce:
\begin{enumerate}[(I$'$)]
\item The same claim for blue--blue crossing as (I) in the proof of Lemma~\ref{lem:redgrayplanar}
applies here.
\item If a blue edge $e$ is switching, then $e$ adds refined crossings
on both $P_1$ and~$P_2$.
\item If the sum of weights of the blue edges attached to 
$u_1,u_2,u_3,u_4$ on $R$ from below is~$k$ (and so $6-k$ from above), 
then each of $Q_1,Q_2$ carries at least $\min(k,6-k)$ unique
crossings with blue edges in~$D$.
This symmetrically applies to $Q_1',Q_2'$.
\end{enumerate}
All three claims are argued very similarly to the proof of Lemma \ref{lem:redgrayplanar}.

It now remains to routinely examine all combinations of the blue edges being top/ bottom/switching, 
and in each one use the claims (I$'$) to (III$'$) to argue that $D$ contains at least $12$ pairwise distinct refined crossings.
Unfortunately, unlike in the previous proof, we now have quite many dissimilar tight or nearly tight cases.
Although each of them is quite easy, it is better to summarize all cases with their guaranteed lower bounds in one Table~\ref{tbl2} for clarity.
Within this summary, we denote by $E_1$ and $E_2$ the sets of the top and bottom blue edges, in order
such that the combined weight of $E_1$ is at least the combined weight of~$E_2$ -- since our arguments are fully symmetric in top/bottom edges.
Let $E_s$ the set of remaining switching blue edges.
%

\begin{table}[t]
\centering\normalsize
\begin{tabular}{| c | c | c | c | c | c || c || }
\hline
$E_1$ & $E_2$ & $E_s$ & {\scriptsize \makecell{(I$'$) pairwise\\ crossings \\in $E_1$ and $E_2$ }} & {\scriptsize
	\makecell{(II$'$) crossings\\ of switching edges \\to paths $P_1,P_2$ }} & {\scriptsize
	\makecell{(III$'$) crossings of \\ blue edges to paths \\$Q_1, Q_1^\prime, Q_2, Q_2^\prime$ }} & total\\ 
\hline 

\{\} & \{\} & \{1,1,2,2\} & 0 & 12 & 0 & 12\\ 
\hline 
\{1\} & \{\} & \{1,2,2\} & 0 & 10 & 2 & 12\\ 
\hline 
\{1\} & \{1\} & \{2,2\} & 0 & 8 & 4 & 12\\ 
\hline 
\{2\} & \{\} & \{1,1,2\} & 0 & 8 & 4 & 12\\ 
\hline 
\{2\} & \{1\} & \{1,2\} & 0 & 6 & 6 & 12\\ 
\hline 
\{2\} & \{2\} & \{1,1\} & 0 & 4 & 8 & 12\\ 
\hline 
\{2\} & \{1,1\} & \{2\} & 1 & 4 & 8 & 13\\ 
\hline 
\{1,1\} & \{\} & \{2,2\} & 1 & 8 & 4 & 13\\ 
\hline 
\{1,2\} & \{\} & \{1,2\} & 2 & 6 & 6 & 14\\ 
\hline 
\{1,2\} & \{1\} & \{2\} & 2 & 4 & 8 & 14\\ 
\hline 
\{1,2\} & \{2\} & \{1\} & 2 & 2 & 10 & 14\\ 
\hline 
\{1,2\} & \{1,2\} & \{\} & 2+2 & 0 & 12 & 16\\ 
\hline 
\{2,2\} & \{\} & \{1,1\} & 4 & 4 & 4 & 12\\ 
\hline 
\{2,2\} & \{1\} & \{1\} & 4 & 2 & 6 & 12\\ 
\hline 
\{2,2\} & \{1,1\} & \{\} & 4+1 & 0 & 8 & 13\\ 
\hline
\{1,1,2\} & \{\} & \{2\} & 5 & 4 & 4 & 13\\ 
\hline 
\{1,1,2\} & \{2\} & \{\} & 5 & 0 & 8 & 13\\ 
\hline 
\{1,2,2\} & \{\} & \{1\} & 8 & 2 & 2 & 12\\ 
\hline 
\{1,2,2\} & \{1\} & \{\} & 8 & 0 & 4 & 12\\ 
\hline 
\{1,1,2,2\} & \{\} & \{\} & 13 & 0 & 0 & 13\\ 
\hline 
\end{tabular}
\medskip
\caption{The complete case analysis for the proof of Lemma~\ref{lem:redgraynonplanar}.
	The sets $E_1$ and $E_2$ of top/bottom edges, and the set $E_s$ of switching edges are listed only
	by their weights since only the weights really matter in the claims (I$'$), (II$'$) and (III$'$).
	The first column is ordered by the increasing combined weight.
	Columns $4$ to $6$ list lower bounds on the exclusive contributions of each of the claims (I$'$) to (III$'$)
	to the total number of refined crossings.}
\label{tbl2}
\end{table}

When checking each row of the lower-bound entries in Table~\ref{tbl2}, note that the claim (I$'$) separately
contributes all mutual crossings within the set $E_1$ and all within $E_2$.
The claim (II$'$) contributes at least $2\cdot|E_s|$ exclusive refined crossings by the switching blue edges to the paths $P_1, P_2$.
The claim (III$'$) contributes in total at least $\min_{p+q=|E_s|} \big(2\min(|E_1|+p,|E_2|+q)+2\min(|E_1|+q,|E_2|+p)\big)$
exclusive refined crossings by the blue edges to the paths $Q_1, Q_1^\prime, Q_2,Q_2^\prime$.

For a more detailed illustration, we pick one of the less-trivial rows of Table~\ref{tbl2} -- row 14 with $E_1=\{2,2\}$, $E_2=\{1\}$ and $E_s=\{1\}$.
Then the two weight-$2$ top blue edges of $E_1$ mutually cross by (I$'$), contributing $4$ crossings to the sum.
The single switching edge of $E_s$ contributes $2$ exclusive crossings by (II$'$).
Regarding the contribution of (III$'$), the combined weight of the blue edges attached to $u_1,u_2,u_3,u_4$ from below is $|E_2|+b=b+1$ 
for $b\in\{0,1\}$, depending on the attachment of the single weight-$1$ switching edge $f\in E_s$, 
and then the weight of the blue edges attached to $v_1,v_2,v_3,v_4$ from below is $|E_2|+(|E_s|-b)=2-b$.
We hence get at least $2\min(b+1,6-b-1)=2b+2$ exclusive crossings of the blue edges with $Q_1,Q_2$,
and at least $2\min(2-b,6-2+b)=4-2b$ exclusive crossings of the blue edges with $Q_1',Q_2'$.
These two together give the minimum lower bound of $6$ by (III$'$).
All remaining rows of Table~\ref{tbl2} are analogous (and often not using the full scale of the outlined arguments).
\qed\end{proof}

%


We can finish our self-contained computer-free proof of $\crg(G_{13})\geq13$:

\begin{proof}[{\it of Theorem~\ref{thm:main13alt}}]
Let $D$ be an optimal drawing of $G_{13}$ satisfying the conclusion of Lemma~\ref{lem:redbluecr}.
Thanks to Lemma~\ref{lem:redgrayplanar}, we may assume that the subdrawing of $G_0$ within $D$ is not planar.
If the red edges of $G_0$ are not crossed, we have a crossing of two gray
edges of $G_0$ in $D$ and $12$ more refined crossings by Lemma~\ref{lem:redgraynonplanar}, altogether~$13$ crossings.

We now consider that some red edge is crossed in a point~$x$ by a gray or green edge $g$ 
(since $g$ is not blue by Lemma~\ref{lem:redbluecr}).
Then we redraw $g$ as follows:
Up to symmetry, let $x$ be closer (or equal) to $v_5$ than to~$u_5$ on~$R$.
We cut the drawing of $g$ at $x$ and route both parts of $g$ closely along
their side of the drawing of $R$, until we rejoin them at the end~$v_5$.
This redrawing~$D'$~saves $\ell\in\{3,4,5,7\}$ crossings of $g$ at~$x$,
and creates at most $\ell-1$ new refined crossings only between $g$ and the blue
edges ending on $R$ between $x$ and $v_5$, as one can check in Fig.~\ref{fig:main13}.
So, the number of refined crossings in $D'$ is at most $\crg(G_{13})-1$.
We possibly repeat the same procedure for all other crossings of red edges in~$D'$, and denote by $D''$ the resulting drawing.

We apply Lemma~\ref{lem:redgraynonplanar} to $D''$, finding at least $12$ refined crossings.
Since the number of refined crossings in $D''$ is at most the same number in $D'$,
and that is in turn at most $\crg(G_{13})-1$, we obtained desired $\crg(G_{13})\geq12+1=13$.
\qed\end{proof}


\section{The Improved Construction for Theorem~\ref{thm:main13impro}}\label{sec:improved}

\subsection{A generalized $13$-crossing-critical family}

We give the generalized definition, extending the family of Theorem~\ref{thm:main13}(b), in two stages.

\begin{definition}[Graphs $G_{13}^k$]\label{def:ccg13k}
Let $k\geq2$ be an integer.
Let $G_0$ be the induced subgraph of the graph $G_{13}$ from Figure~\ref{fig:main13}(a) on the vertex set
$\{u_5,u_4,u_3,u_2,u_1,x_1,x_2,v_1,v_2,v_3,v_4,v_5\}$.
Let $Q$ denote the path on $2k$ vertices $x^1,y^1,x^2,y^2,\ldots,x^k,y^k$ in this order,
and with all edges as multiedges of weight~$7$.
Let $G_1$ be the graph on $10+2k$ vertices obtained from the graph $G_0-x_1x_2$ by identifying $x_1$ with $x^1$ and $x_2$ with~$y^k$.

Let $D_i$, for $i\in\{1,\ldots,k\}$, denote the graph on the vertex set $\{x^i,y^i,w_1^i,w_2^i,w_3^i,w_4^i\}$
(where $x^i,y^i$ are from~$Q$) with the edges $x^iw_1^i$, $y^iw_4^i$, $w_1^iw_4^i$, $w_2^iw_3^i$ of weight $1$
and the edges $w_1^iw_2^i$ and $w_3^iw_4^i$ of weight~$2$.
From the union $G_1\cup D_1\cup\ldots\cup D_k$ we obtain the graph $G_{13}^k$ via identifying $u_5$ with $w_2^1$ and $w_3^k$ with $v_5$,
and for $i=2,3,\ldots,k$, identifying $w_3^{i-1}$ with $w_2^{i}$.
\end{definition}

\begin{definition}[Graphs $G_{13}^{(k_1,\ldots,k_m)}$]\label{def:ccg13kkk}
Let $m\geq1$ and $k_1,\ldots,k_m$ be positive half-integers%
\footnote{A half-integer is an integer multiple of $\frac12$, and we choose these in the definition in order to stay compatible
with the original definition in \cite{DBLP:journals/combinatorica/BokalDHLMW22}, where $k_i$ were just integers.},
such that~$k_1+\ldots+k_m=k$ is an integer and that $k_1+\ldots+k_j$ is also an integer for some~$1\leq j<m$.%
\footnote{See the proof of Lemma~\ref{lem:newgeq13} for use of this auxiliary condition.}
Consider the graph $G_{13}^k$ and the subpath $Q\subseteq G_{13}^k$ from Definition~\ref{def:ccg13k},
and let $Q_1,\ldots,Q_m\subseteq Q$ be pairwise disjoint subpaths of $Q$, consecutive in this order, such that $|V(Q_i)|=2k_i$ for~$i=1,\ldots,m$.
The graph $G_{13}^{(k_1,\ldots,k_m)}$ is obtained from $G_{13}^k$ by contracting each of the paths $Q_1,\ldots,Q_m$ into one vertex.
\end{definition}

For an illustration, the graph $G_{13}$ of Figure~\ref{fig:main13}(a) is isomorphic to $G_{13}^{(1,1)}$,
and further Figure~\ref{fig:splitwedge} shows the graph $G_{13}^{(1\!/2,\,1\!/2,\,1)}$.
The (sub)graph $D_i$ of $G_{13}^k$ is called the $i$-th {\em wedge}, and this notation is naturally extended to $G_{13}^{(k_1,\ldots,k_m)}$.
We call the contracted path $Q$ in the previous definition the {\em spine of}~$G_{13}^{(k_1,\ldots,k_m)}$.
Analogously to the previous section, we say that the edges of $Q$ are {\em red}, and the edges between
vertices $u_i$ and $v_j$ of $Q$ are {\em blue}.

Our first goal is to prove that all these graphs $G_{13}^{(k_1,\ldots,k_m)}$ require at least $13$ crossings.
For that we use also the following two claims from previous related research which are, in particular the first one,
now ``computer-free'' thanks to Theorem~\ref{thm:main13alt}.
\begin{lemma}[{\cite[Theorem~5.7]{DBLP:journals/combinatorica/BokalDHLMW22}}]\label{lem:13integ}\hfill
Let $m\geq2$ and $k_1,\ldots,k_m$ be positive integers.
Then $\crg\big(G_{13}^{(k_1,\ldots,k_m)}\big)\geq13$.
\end{lemma}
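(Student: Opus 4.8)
Since this statement is \cite[Theorem~5.7]{bokal2022bounded}, the plan is to invoke that result, noting that the only part of its proof that was not elementary -- the lower bound $\crg(G_{13})\geq13$ on the base graph -- is now supplied computer-free by Theorem~\ref{thm:main13alt}, so the whole claim becomes computer-free. For completeness I also sketch how one would re-derive it. I would argue by induction on the number of wedges $k=k_1+\cdots+k_m$ of $G_{13}^{(k_1,\ldots,k_m)}$, with the number of blocks $m$ as a secondary parameter so that $(k,m)$ is well-ordered. The base case is $k=m=2$, which forces $k_1=k_2=1$; there $G_{13}^{(1,1)}\cong G_{13}$ and the claim is exactly Theorem~\ref{thm:main13alt}.

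For the inductive step I would take an optimal drawing $D$ of $G=G_{13}^{(k_1,\ldots,k_m)}$ and assume, for contradiction, that it has fewer than $13$ crossings. The construction is deliberately ``heavy'': each of the $m-1$ edges of the contracted spine has weight~$7$, and the load-bearing wedge edges $w_1^iw_2^i$ and $w_3^iw_4^i$ have weight~$2$, so one crossing on a spine edge already costs~$7$ and two spine crossings cost $\geq14$. From this one reads off that $D$ is almost planar near the spine and inside the wedges, and -- after a little case work of the same flavour as, but easier than, the proofs of Lemmas~\ref{lem:redbluecr}--\ref{lem:redgraynonplanar} -- that $D$ contains an \emph{uncrossed reduction edge}: an uncrossed spine edge between two consecutive contracted blocks when $m\geq3$, or, when $m=2$ (so $k\geq3$ and, up to symmetry, $k_1\geq2$), a short uncrossed piece of one wedge gadget of the larger block. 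Contracting such an edge is crossing-non-increasing, since contracting an uncrossed edge never creates a crossing, and it turns $D$ into a drawing with fewer than $13$ crossings of a strictly smaller member of the family -- $G_{13}^{(\ldots,k_i+k_{i+1},\ldots)}$ in the first case and $G_{13}^{(k_1-1,k_2)}$ in the second -- contradicting the induction hypothesis.

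The step I expect to be the main obstacle is exactly this ``little case work'': finding an uncrossed reduction edge in a completely arbitrary drawing with at most $12$ crossings, where a priori even the cheap weight-$1$ wedge edges may be crossed. One deals with it by first performing a few cheap local redrawings in the spirit of Lemma~\ref{lem:redbluecr} to tidy the drawing near the spine and the wedges, after which a suitable uncrossed reduction edge is available; this is elementary precisely because every wedge attaches to the rest of $G_{13}^{(k_1,\ldots,k_m)}$ through only a bounded number of edges, almost all of weight $\geq2$. Alternatively, one can skip the induction altogether and rerun the red/blue/green/gray counting of Section~2 verbatim on $G_{13}^{(k_1,\ldots,k_m)}$: the red spine, the blue edges $u_iv_j$, and the green and gray separating paths all have direct analogues of exactly the same weights, the list of essentially different planar embeddings of the red-plus-gray skeleton stays short, and Table~\ref{tbl2} is literally unchanged -- a longer spine and extra wedges only lengthen the gray separators, never decrease their number.
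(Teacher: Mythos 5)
Your primary move --- invoking \cite[Theorem~5.7]{bokal2022bounded} and observing that its only non-elementary ingredient, the bound $\crg(G_{13})\geq 13$, is now supplied by Theorem~\ref{thm:main13alt} --- is exactly what the paper does: Lemma~\ref{lem:13integ} is imported with no new proof, accompanied only by the remark that it has become computer-free. Had you stopped there, the answer would simply be ``same approach, correct.''

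The supplementary sketch, however, contains a genuine gap in the case $m=2$, $k\geq 3$. You claim that one can find ``a short uncrossed piece of one wedge gadget'' whose contraction turns a drawing of $G_{13}^{(k_1,k_2)}$ into a drawing of $G_{13}^{(k_1-1,k_2)}$. No edge contraction inside the wedge structure does this. Merging the $i$-th and $(i+1)$-st wedges requires the new weight-$1$ edges $w_1^iw_4^{i+1}$ and $w_2^iw_3^{i+1}$, whose natural routings run along the paths $(w_1^i,w_4^i,w_3^i,w_1^{i+1},w_4^{i+1})$ and $(w_2^i,w_3^i,w_3^{i+1})$ --- and these two paths \emph{share} the vertex $w_3^i=w_2^{i+1}$, so they cannot be realized by contracting disjoint uncrossed connected subgraphs. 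This is precisely why the reduction is isolated as Lemma~\ref{lem:wedge-} and proved in Appendix~I by a case analysis on the cyclic order of the four edges at $w_3^i$: in the unfavourable rotation the two rerouted edges are forced to cross each other once, and that extra crossing must be paid for by exhibiting an unavoidable crossing on the $4$-cycle $(x^i,w_4^i,w_3^i,w_1^{i+1})$ via a Jordan-curve argument. Your sketch replaces this delicate surgery with ``contracting an uncrossed edge never creates a crossing,'' which is true but inapplicable. (The $m\geq 3$ step of your induction --- at most one weight-$7$ spine edge can be crossed, contract an uncrossed one --- is fine and mirrors the reduction used in Lemma~\ref{lem:newgeq13}.) Your alternative of rerunning the Section~2 counting ``verbatim'' is also an overclaim: Lemma~\ref{lem:redgrayplanar} rests on there being only two non-equivalent planar embeddings of the red-plus-gray subgraph, and with more wedges (each able to flip, with varying rotations at the shared vertices $w_3^i=w_2^{i+1}$) that enumeration no longer holds as stated.
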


\begin{lemma}[{\rm from the proof of} {\cite[Lemma~5.6]{DBLP:journals/combinatorica/BokalDHLMW22}}]\label{lem:wedge-}
Let $m\geq j\geq1$ and $k_1,\ldots,k_m$ be positive half-integers, such that $k_j\geq\frac32$.
Then $\crg\big(G_{13}^{(k_1,\ldots,k_j,\ldots,k_m)}\big)\geq\crg\big(G_{13}^{(k_1,\ldots,k_j-1,\ldots,k_m)}\big)$.
\end{lemma}
Since Lemma~\ref{lem:wedge-} is only implicitly contained in the proof of mentioned \cite[Lemma~5.6]{DBLP:journals/combinatorica/BokalDHLMW22}
(which has a more restrictive setup), we also provide a self-contained proof of it in the Appendix.

We now finish the lower bound proof:
\begin{lemma}\label{lem:newgeq13}
Let $m\geq3$ and $k_1,\ldots,k_m$ be positive half-integers such that $k_1\geq1$ and $k_m\geq1$.
Then $\crg\big(G_{13}^{(k_1,\ldots,k_m)}\big)\geq13$.
\end{lemma}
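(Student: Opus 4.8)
The plan is to reduce the half-integer case to the integer case already handled in Lemma~\ref{lem:13integ}, using Lemma~\ref{lem:wedge-} to ``normalize'' the parameters. First I would observe that the only way a parameter $k_i$ fails to be a positive integer is when $k_i=\tfrac12$, i.e.\ the corresponding contracted path $Q_i$ has only $2k_i=1$ vertex; since $k_1\geq1$ and $k_m\geq1$ by hypothesis, any such ``half'' wedges occur only at positions $2,\ldots,m-1$. The idea is to merge each such half-parameter into a neighbouring parameter: because consecutive $k_i$ come in a sum $k_1+\cdots+k_m=k$ which is an integer, the half-integer entries can be grouped so that, after merging adjacent entries, all resulting parameters are integers.

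The subtle point is that merging two consecutive parameters $k_j,k_{j+1}$ is \emph{not} the same graph operation as contracting a longer subpath of the spine — merging $Q_j$ and $Q_{j+1}$ into a single contracted vertex would also identify the two wedges $D_j$ and $D_{j+1}$, changing the graph. So instead of literally merging, I would use Lemma~\ref{lem:wedge-} as a monotonicity tool: repeatedly increasing the smallest integral parameter (or, symmetrically, I would first argue that I can \emph{add} spine length, which only helps) does not decrease the crossing number. More precisely, the clean route is: given a sequence with some entries equal to $\tfrac12$, I would first replace each $\tfrac12$-entry by a $1$-entry; this corresponds to subdividing the corresponding contracted spine vertex back into an edge of weight $7$ with a new degree-$2$-ish vertex inside, which is a subdivision-type modification of the spine and hence does not change the crossing number (the two half-wedges stay distinct and each now sits on its own spine vertex). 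After this step every parameter is a positive integer, $m\geq3\geq2$, and Lemma~\ref{lem:13integ} applies directly to give $\crg\geq13$.

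The step I expect to be the main obstacle is making the ``replace $\tfrac12$ by $1$'' move rigorous at the level of the explicit construction in Definitions~\ref{def:ccg13k} and~\ref{def:ccg13kkk}: one has to check that passing from $G_{13}^{(\ldots,1/2,\ldots)}$ to $G_{13}^{(\ldots,1,\ldots)}$ is exactly an uncontraction of a single spine vertex into a length-$1$ red path (weight $7$), with the incident wedge edges $x^i w_1^i$ and $y^i w_4^i$ landing on the two endpoints of that red edge rather than on a common vertex — and that this is indeed the correct combinatorial picture of $D_i$ when $k_i=1$ versus $k_i=\tfrac12$. Equivalently, one can phrase it via Lemma~\ref{lem:wedge-} run ``in reverse'': $\crg(G_{13}^{(k_1,\ldots,k_j+1,\ldots,k_m)})\geq \crg(G_{13}^{(k_1,\ldots,k_j,\ldots,k_m)})$ whenever the left-hand parameter is at least $2$, so bumping each $\tfrac12$ up to $1$ can only increase the crossing number; combined with the already-known value $13$ for the all-integer case (which is also a lower bound), and with the matching upper bound $13$ coming from an explicit drawing analogous to Figure~\ref{fig:main13}, we conclude $\crg\big(G_{13}^{(k_1,\ldots,k_m)}\big)\geq13$. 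Once the reduction to integral parameters is justified, the remainder is immediate from Lemma~\ref{lem:13integ}.
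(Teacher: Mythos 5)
There is a genuine gap, and it sits exactly where you flagged ``the main obstacle.'' Replacing a $\tfrac12$-entry by a $1$-entry is \emph{not} a subdivision of the spine: the sum $k_1+\cdots+k_m=k$ equals the number of wedges, so bumping an entry from $\tfrac12$ to $1$ changes $k$ and hence the whole wedge structure. Concretely, a contracted vertex with parameter $\tfrac12$ carries only \emph{one} leg of some wedge $D_j$ (either $x^jw_1^j$ or $y^jw_4^j$), the other leg of that same wedge sitting on a neighbouring spine vertex; a vertex with parameter $1$ carries \emph{both} legs of a single wedge. No un-contraction or subdivision converts one picture into the other. Nor can Lemma~\ref{lem:wedge-} be ``run in reverse'' to do this: it changes a parameter by integer steps of $1$ and requires the larger parameter to be at least $2$, so it never relates $(\ldots,\tfrac12,\ldots)$ to $(\ldots,1,\ldots)$.

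The honest version of your normalization is to contract the red spine edge $x^jy^j$ joining the two halves of a split wedge, which merges two adjacent $\tfrac12$-entries into a $1$-entry. Contraction of an \emph{uncrossed} edge in an optimal drawing does not raise the crossing number, and since spine edges have weight $7$, at most one spine edge is crossed in a sub-$13$-crossing drawing --- but that one crossed edge may be precisely a wedge-internal edge $x^jy^j$, and then the contraction argument fails. This residual case is where all the real work of the paper's proof lies: one contracts everything else, uses Lemma~\ref{lem:wedge-} to reduce to the single graph $G_{13}^{(1/2,\,1/2,\,1)}$, and then runs a direct case analysis over which edge crosses $x^1y^1$ (ruling each candidate out by a redrawing, or exhibiting $13$ crossings via edge-disjoint cycles). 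Your proposal has no substitute for this analysis, so the reduction to Lemma~\ref{lem:13integ} is incomplete. (The closing appeal to a matching upper bound of $13$ is also beside the point for a lower-bound claim.)
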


\begin{proof}
Let $k_1+\ldots+k_m=k$.
Consider an optimal drawing of $G:=G_{13}^{(k_1,\ldots,k_m)}$, i.e., one with $\crg(G)$ crossings.
Let $Q$ be the spine of $G$.
Since every edge of $Q$ is of weight $7$, at most one edge $f$ of $Q$ is crossed, or the lemma is already true.
Assume first that $f\not=x^iy^i$ for any $i\in\{1,\ldots,m\}$ (i.e., $f$ does not belong to any of the wedges of~$G$).
Then we contract all edges of $Q$ except one, which is preferably~$f$, such that we get a graph isomorphic to
$G^{(l_1,l_2)}_{13}$ for some integers $l_1+l_2=k$.
Since contractions of uncrossed edges do not raise the crossing number, we have that 
$\crg(G)\geq\crg(G^{(l_1,l_2)}_{13})\geq13$ by Lemma~\ref{lem:13integ}.

Assume now the opposite case (in which the previous argument with integers $l_1+l_2=k$ would fail); 
that $f=x^iy^i$ where $i\in\{1,\ldots,m\}$.
Let $1\leq j<m$ be such that $k_1+\ldots+k_j=k'$ is an integer, as assumed in Definition~\ref{def:ccg13kkk}.
Then $f'=y^{k'}x^{k'+1}$ is an edge of $Q$ not belonging to a wedge.
We contract all edges of $Q$ except $f$ and~$f'$, and we get (up to symmetry) a graph isomorphic to
$G^{(l_1,l_2,l_3)}_{13}$ for some $l_1+l_2+l_3=k$ such that $l_3$ is an integer and $l_1,l_2$ are odd multiples of~$\frac12$. 
Again, $\crg(G)\geq\crg(G^{(l_1,l_2,l_3)}_{13})$ since only uncrossed edges have been contracted.
By an iterative application of Lemma~\ref{lem:wedge-}, we arrive at the conclusion that
\mbox{$\crg(G^{(l_1,l_2,l_3)}_{13})\geq \crg(G_{13}^{(1\!/2,\,1\!/2,\,1)})$.}

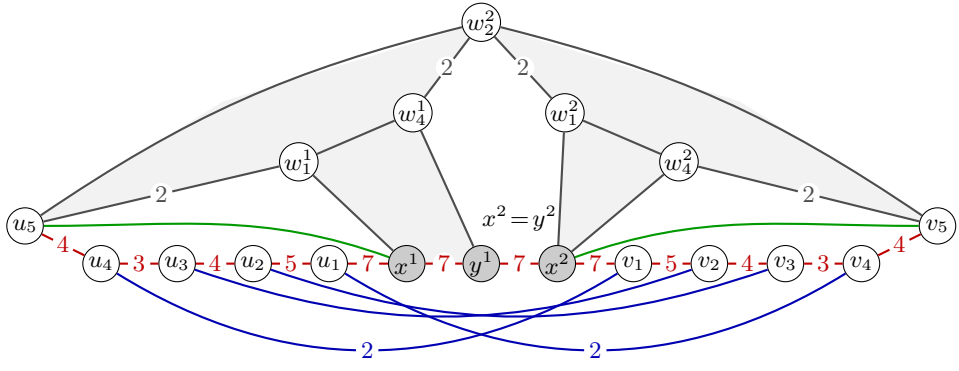
\begin{figure}[t]
 \centering
 \begin{tikzpicture}[scale=1.1]\footnotesize
  \tikzset{every node/.style={labeled}}
  \tikzset{every path/.style={thick}}
  \node[fill=black!20!white] (x) at (-0.98,0) {$x^1$};
  \node[fill=black!20!white] (y) at (0,0) {$y^1$};
  \node[fill=black!20!white] (xx) at (1,0) {$x^2$};
  \node[draw=none] at (0.5,0.6) {\footnotesize$x^2\!=\!y^2$};
  \node (u1) at (-2,0) {$u_1$};
  \node (u2) at (-3,0) {$u_2$};
  \node (u3) at (-4,0) {$u_3$};
  \node (u4) at (-5,0) {$u_4$};
  \node (u5) at (-6,0.5) {$u_5$};
  \node (v1) at (2,0) {$v_1$};
  \node (v2) at (3,0) {$v_2$};
  \node (v3) at (4,0) {$v_3$};
  \node (v4) at (5,0) {$v_4$};
  \node (v5) at (6,0.5) {$v_5$};
  \node (h2) at (-2.4,1.35) {$w^{1}_1$};
  \node (h3) at (-0.9,2) {$w^{1}_4$};
  \node (k1) at (0,3.2) {$w^2_2$};
  \node (k2) at (1.1,2) {$w^{2}_1$};
  \node (k3) at (2.6,1.35) {$w^{2}_4$};
  \tikzset{every node/.style={edge-label}}
  \draw[green!60!black] (x) to[in=0, out=160] (u5);
  \draw[green!60!black] (xx) to[out=20, in=180] (v5);
  \draw[red!75!black] (u5) -- node {$4$} (u4) -- node {$3$} (u3)
   -- node {$4$} (u2) -- node {$5$} (u1) -- node {$7$} (x) -- node {$7$} (y) -- node {$7$} (xx)
   -- node {$7$} (v1) -- node {$5$} (v2) -- node {$4$} (v3) -- node {$3$} (v4) -- node {$4$} (v5);
  \draw[blue!70!black]
   (u2) edge[bend right=19] (v3)
   (u3) edge[bend right=19] (v2)
   (u1) edge[bend right=32] node {2} (v4)
   (u4) edge[bend right=32] node {2} (v1)
  ;
  \draw[black!70!white] (u5) edge[thick] node {2} (h2) (h2) -- (h3) (h3) edge[thick] node {2} (k1)
     (k1) edge[bend right=10] (u5)
   (k1) edge[thick] node {2} (k2) (k2) -- (k3) (k3) edge[thick] node {2} (v5) (v5) edge[bend right=10] (k1)
   (h2) -- (x)  (y) -- (h3)  (k2) -- (xx) -- (k3)
  ;
  \begin{scope}[on background layer]
    \fill[color=black!5!white] (y.center) -- (x.center) -- (h2.center) -- (u5.center)
      -- (-3.4,2.15) -- (k1.center) -- (h3.center) -- cycle;
    \fill[color=black!5!white] (xx.center) -- (k3.center) -- (v5.center)
      -- (3.4,2.15) -- (k1.center) -- (k2.center) -- cycle;
  \end{scope}
 \end{tikzpicture}

 \caption{The graph~$G_{13}^{(1\!/2,\,1\!/2,\,1)}$.}
 \label{fig:splitwedge}
\end{figure}

In the rest, we analyze the number of crossing in an optimal drawing of the graph 
$G':=G_{13}^{(1\!/2,\,1\!/2,\,1)}$ which is depicted in Figure~\ref{fig:splitwedge}.
We may also assume that in every optimal drawing of~$G'$, the edge $f=x^1y^1$ is crossed, and so exactly once,
or we are readily finished by a contraction down to $G_{13}^{(1,1)}=G_{13}$.
By Proposition~\ref{pro:folklorecr}, $f$ is not crossed by $x^1w_1^1$, $y^1w_4^1$ or~$x^1u_5$,
and $f$ is not crossed by any edge of weight greater than $1$ which would already give $14$ crossings.

Assume that $f$ is crossed by the edge $f_1=w_1^1w_4^1$.
We denote by $D_1,\ldots,D_6$ the six pairwise edge-disjoint cycles in $G'$ formed by each one of the six
blue edges (considered as multiple) and the corresponding subpaths of $Q$; see in Figure~\ref{fig:splitwedge}.
All these cycles contain one edge of the multiedge $x^1y^1$.
Consider moreover the cycle $C_1=(w_1^1,w_4^1,w_2^2,u_5)$ where $f_1\in E(C_1)$.
Then $C_1$ transversely crosses each cycle $D_i$, $i\in\{1,\ldots,6\}$, and since $C_1$ and $D_i$ are
are vertex disjoint, they have another crossing which is distinct from the other ones considered.
Altogether, we see at least $7+6=13$ crossings, as desired.
The same argument covers also the cases of $f$ being crossed by one of the edges $u_5w_2^2$, $w_1^2w_4^2$ and $w_2^2v_5$.

Assume that $f$ is crossed by an edge $f_2\in\{x^2w_1^2,x^2w_4^2,x^2v_5\}$.
Since there is no other crossing on the edges $x^1y^1$ and $y^1x^2$, we may simply redraw $f_2$ closely along the
path $(x^1,y^1,x^2)$ towards its end $x^2$ (possibly crossing $y^1w_4^1$) instead of crossing~$f$, 
which contradicts optimality of the drawing.
It remains to consider that $f$ is crossed by a (blue) edge $f_3\in\{u_3v_2,u_2v_3\}$.
Observe that there is at most one edge other than $f_3$ which crosses the spine $Q$, or we already have $7+2\cdot3=13$ crossings.
In this case we redraw $f_3$ closely along the subpath $Q_3\subseteq Q$ between the ends of $f_3$, 
for which we have two possibilities (one on each ``side'' of~$Q_3$).
Since there are altogether $6+5=11$ edges incident to the internal vertices of $Q_3$ (counting with weights),
one of the two possibilities of redrawing $f_3$ along $Q_3$ crosses at most $5$ of those edges, and it possibly
crosses also the at most one other edge crossing $Q$.
We have got at most $6$ crossings on $f_3$, which again contradicts optimality of the drawing.
We are done.
\qed\end{proof}

The last part in this subsection deals with crossing-criticality of the above defined family.

\begin{theorem}\label{thm:13altcrit}
Let $m\geq3$ and $k_1,\ldots,k_m$ be positive half-integers such that $k_1\geq1$ and $k_m\geq1$.
Then $G_{13}^{(k_1,\ldots,k_m)}$ is a $13$-crossing-critical graph.
\end{theorem}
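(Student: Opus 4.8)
The plan is to prove the two halves of $13$-crossing-criticality separately: the lower bound $\crg\big(G_{13}^{(k_1,\ldots,k_m)}\big)\ge 13$ is already available from Lemma~\ref{lem:newgeq13}, so the real content is showing that deleting any single (multi)edge $e$ drops the crossing number below $13$. By the weight convention, it suffices to produce, for each edge $e$, a drawing of $G_{13}^{(k_1,\ldots,k_m)}-e$ with at most $12$ crossings (deleting one parallel copy out of a multiedge of weight $p$ is the same as lowering that weight to $p-1$, which only helps). I would start from the canonical ``belt'' drawing of the family, the one generalizing Figure~\ref{fig:main13}(a)/Figure~\ref{fig:splitwedge}, in which all $13$ crossings are blue--blue crossings among the four blue edges $u_1v_4,u_2v_3,u_3v_2,u_4v_1$ in the central copy of $G_0$: the two weight-$2$ edges $u_1v_4$ and $u_4v_1$ nest around the inner pair, giving $2\cdot2+2\cdot1+2\cdot1+2\cdot1+1\cdot1+2\cdot1=13$ crossings (matching the labels in Figure~\ref{fig:main13}). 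Everything else in the drawing -- the red spine $Q$, the blue edges, the green edges $u_5x_1,v_5x_2$, and the two wedge subgraphs together with all the duplicated wedges $D_i$ -- is drawn planarly, nested along the spine, exactly as the construction prescribes.

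The core of the argument is then a short, essentially local case check. First, the red edges of the spine: the spine $Q$ is a path of weight-$7$ multiedges and in the canonical drawing it is uncrossed; contracting all but one or two of its edges and invoking Definitions~\ref{def:ccg13k}--\ref{def:ccg13kkk} shows the family is built so that deleting one red parallel edge leaves a graph still drawable with $\le 12$ crossings -- concretely, once a red multiedge has weight $\le 6$ we may reroute one of the weight-$2$ blue edges across it, trading $13$ blue--blue crossings for at most $6+\text{(a few)}$ crossings. More cleanly: removing a red parallel edge does not change which blue edges nest, but it does allow the drawing in which one blue edge of the central $G_0$ is ``pulled through'' the now-lighter spine, a standard move yielding at most $12$. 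Second, the blue edges: deleting one parallel copy of $u_1v_4$ or $u_4v_1$ immediately drops the $13$ to $11$; deleting $u_2v_3$ or $u_3v_2$ drops it to $11$ as well, since these weight-$1$ edges participate in $2+2+1=5$ or $2+1+1=4$ crossings. Third, the green and gray edges: each of these is uncrossed in the canonical drawing, so the issue is not ``can we save a crossing'' but rather ``is the graph still $13$-critical'', i.e.\ does deleting a green or a gray edge free up enough topological room to redraw. Here I would argue that deleting, say, $u_5x_1$ or a wedge edge $w_1^iw_2^i$ opens a face through which one of the central blue edges (or one of the nesting weight-$2$ blue edges) can be rerouted, again trading the $13$ crossings for at most $12$; the wedge structure is designed precisely so that each wedge edge, once removed, creates such a shortcut.

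The step I expect to be the main obstacle is the gray/green case for the \emph{half-integer} parameters, i.e.\ the graphs with split wedge tips as in $G_{13}^{(1/2,\,1/2,\,1)}$. For those, a wedge $D_i$ has been ``cut'' so that $x^i$ and $y^i$ are distinct spine vertices, and deleting one of the short wedge edges ($x^iw_1^i$ or $y^iw_4^i$) changes the connectivity of the wedge in a way that is not symmetric to the integer case; I would handle this by a direct redrawing argument on the small graph $G_{13}^{(1/2,1/2,1)}$ (Figure~\ref{fig:splitwedge}) -- showing $\crg\big(G_{13}^{(1/2,1/2,1)}-e\big)\le 12$ for each such $e$ by exhibiting an explicit $\le 12$-crossing drawing -- and then transfer the conclusion to the general half-integer family by the same ``contract all uncrossed spine edges'' reduction used in Lemma~\ref{lem:newgeq13}, together with the fact that edge deletion commutes with contracting edges not incident to the deleted one. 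Finally I would note the remark after Theorem~\ref{thm:main13impro}: the $3$-connected version is obtained directly, and a $2$-connected simple version by subdividing parallel edges, which changes neither the crossing number nor crossing-criticality.

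\qed
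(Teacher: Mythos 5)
Your top-level plan is the same as the paper's: the lower bound is delegated to Lemma~\ref{lem:newgeq13}, and criticality is to be shown by exhibiting, for each edge $e$ (equivalently, for each unit reduction of a multiedge weight), a drawing of $G_{13}^{(k_1,\ldots,k_m)}-e$ with at most $12$ crossings. The blue-edge case is indeed dispatched by the canonical drawing (though your counts are off: reducing $u_1v_4$ to weight $1$ saves $2+1+1=4$ crossings, giving $9$, and deleting $u_2v_3$ saves $2+2+1=5$, giving $8$ --- the conclusion still holds).

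The genuine gap is in the red, green and gray cases, which is where all the work lies. Your mechanism --- start from the canonical $13$-crossing drawing and reroute one blue edge through the region freed by the deleted edge --- does not get below $13$. Concretely: after deleting a green edge $u_5x^1$, moving the weight-$1$ blue edge $u_2v_3$ above the spine saves its $5$ blue--blue crossings but forces at least $4$ crossings with the four edge-disjoint gray paths of claim~(II) in the proof of Lemma~\ref{lem:redgrayplanar} plus at least $1$ on the surviving green edge $v_5y^k$ by claim~(III), so you break even at $13$ (and the weight-$2$ blue edges only make this worse). Deleting one parallel copy of a weight-$2$ gray edge such as $u_5w_1^i$ does not destroy any of those four gray paths, so the same count applies. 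For an internal spine edge (weight $7$, reduced to $6$), a weight-$1$ blue edge crossing it once already costs $6$, and it must additionally either cross the spine a second time or become switching/top, again exceeding the $5$ crossings saved. The paper's proof does not perform local reroutings of the canonical drawing at all: it exhibits globally different drawings of the whole graph (Figure~\ref{fig:splitwcrit}(a) with $13$ crossings in which a weight-$1$ wedge edge crosses the weight-$7$ spine edge, Figure~\ref{fig:splitwcrit}(b) with $18$ crossings concentrated on the wedge edges, and the drawings of Figure~\ref{fig:G13-3red} for the edges induced on $\{u_5,\ldots,u_1,x^1,y^k,v_1,\ldots,v_5\}$, imported from \cite{bokal2022bounded}), each arranged so that the edge in question carries exactly enough crossings that its deletion or weight reduction lands at $12$. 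Your proposal does not supply substitutes for these drawings, and without them the criticality claim is unproved.

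A secondary issue: your plan to verify the half-integral wedge cases on $G_{13}^{(1/2,1/2,1)}$ and then ``transfer the conclusion \ldots by the same contract-all-uncrossed-spine-edges reduction'' points the contraction inequality the wrong way. Contracting uncrossed edges yields $\crg(G)\geq\crg(G/F)$, a lower-bound tool; an upper bound on $\crg(G-e)$ for the large graph requires drawing the large graph itself. What is true (and what the ``$\cdots$'' in the paper's figures encode) is that the additional wedges and spine vertices can be reinserted planarly into a face of the small drawing without new crossings, but that is an expansion argument, not a contraction argument, and it needs to be stated as such.
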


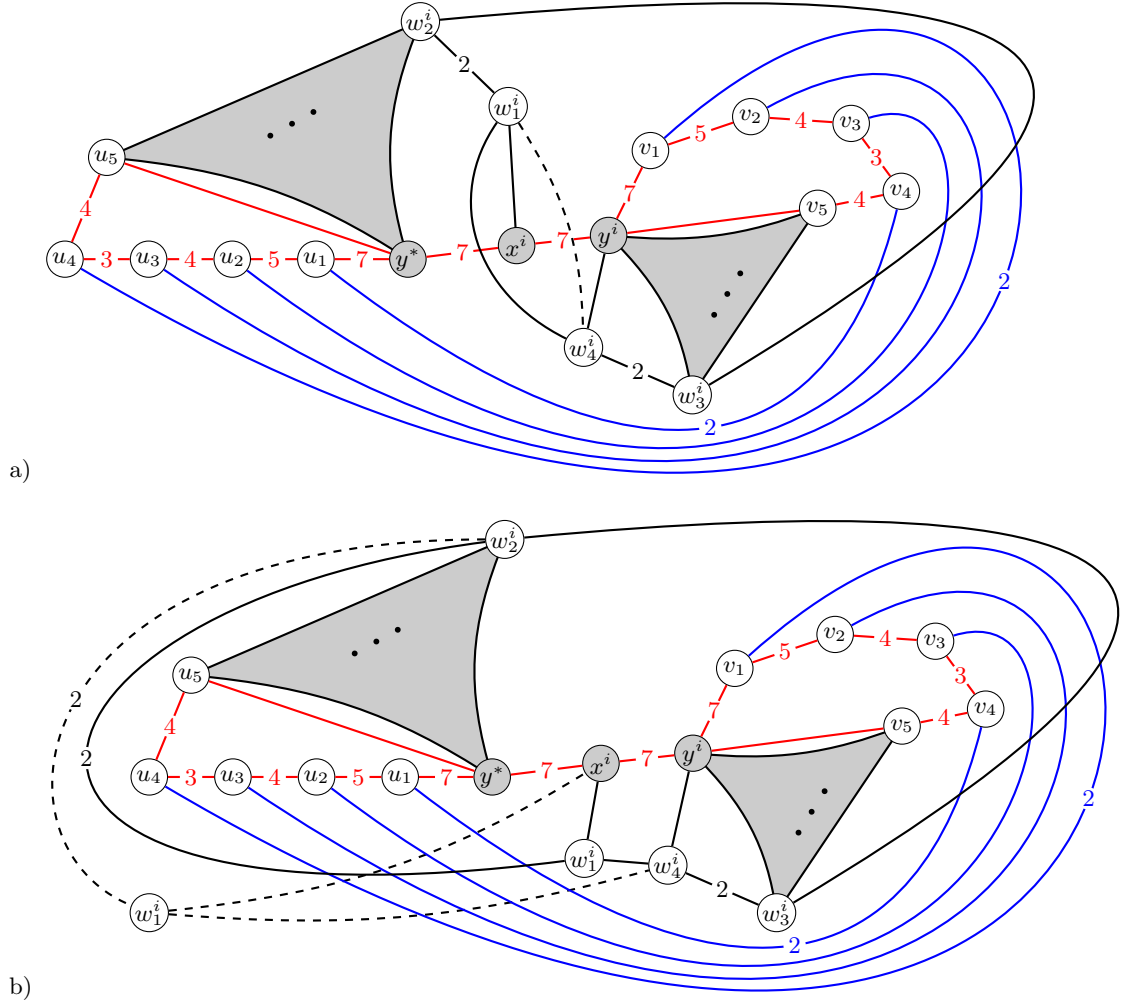
\begin{figure}[t]
 \centering
a)
 \begin{tikzpicture}[xscale=1.1, yscale=0.9]\footnotesize
   \path[use as bounding box] (-5.3,-3.2) rectangle (7.6,4);
  \tikzset{every node/.style={labeled}}
  \tikzset{every path/.style={thick}}
  \node[fill=black!20!white] (x) at (-0.9,0) {$y^{*}$};
  \node[fill=black!20!white] (y) at (0.4,0.2) {$x^{i}$};
  \node[fill=black!20!white] (xx) at (1.5,0.35) {$y^i$};
  \node (u1) at (-2,0) {$u_1$};
  \node (u2) at (-3,0) {$u_2$};
  \node (u3) at (-4,0) {$u_3$};
  \node (u4) at (-5,0) {$u_4$};
  \node (u5) at (-4.5,1.5) {$u_5$};
  \node (v1) at (2,1.6) {$v_1$};
  \node (v2) at (3.2,2.1) {$v_2$};
  \node (v3) at (4.4,2) {$v_3$};
  \node (v4) at (5,1) {$v_4$};
  \node (v5) at (4,0.75) {$v_5$};
  \node (h1) at (-0.75,3.5) {$w^{i}_2$};
  \node (h2) at (0.3,2.25) {$w^{i}_1$};
  \node (h3) at (1.2,-1.3) {$w^{i}_4$};
  \node (k1) at (2.5,-2) {$w^i_3$};
  \tikzset{every node/.style={edge-label}}
  \draw[red] (x) -- (u5) -- node {$4$} (u4) -- node {$3$} (u3) -- node {$4$}
   (u2) -- node {$5$} (u1) -- node {$7$} (x) -- node {$7$} (y) -- node {$7$} (xx) -- node {$7$} (v1) -- node {$5$}
   (v2) -- node {$4$} (v3) -- node {$3$} (v4) -- node {$4$} (v5) -- (xx);
  \draw[blue]
   (u2) .. controls (5,-7.5) and (7,3) .. (v3)
   (u3) .. controls (6.5,-8.8) and (8.2,5.5) .. (v2)
   (u1) .. controls (2,-3.8) and (4.2,-3) .. node {2} (v4)
   (u4) .. controls (9,-10.2) and (8.5,8.8) .. node {2} (v1)
  ;
  \draw (h1) edge[thick] node {2} (h2) (h2) to[bend right=44] (h3) (h3) edge[thick] node {2} (k1)
	 (k1) .. controls (9.3,3) and (7,4.4) .. (h1);
  \draw[dashed,thick] (h2) to[bend left=13] (h3);
  \draw (h2) -- (y) (xx) -- (h3);
  \begin{scope}[on background layer]
   \draw[fill=black!20!white] (x.center) to[bend right=16] (u5.center) -- (h1.center) to[bend right=16] cycle;
   \draw[fill=black!20!white] (xx.center) to[bend right=-16] (k1.center) --
(v5.center) to[bend right=-16] cycle;
  \end{scope}
  \node[rotate=29, draw=none, fill=none] (label) at (138:3) {\huge$\cdots$};
  \node[rotate=58, draw=none, fill=none] (label) at (-9:3) {\huge$\cdots$};
 \end{tikzpicture}
\\[2ex]
b)
 \begin{tikzpicture}[xscale=1.1, yscale=0.9]
   \path[use as bounding box] (-6.3,-3.2) rectangle (6.6,4);
  \tikzset{every node/.style={labeled}}
  \tikzset{every path/.style={thick}}
  \node[fill=black!20!white] (x) at (-0.9,0) {$y^{*}$};
  \node[fill=black!20!white] (y) at (0.4,0.2) {$x^{i}$};
  \node[fill=black!20!white] (xx) at (1.5,0.35) {$y^i$};
  \node (u1) at (-2,0) {$u_1$};
  \node (u2) at (-3,0) {$u_2$};
  \node (u3) at (-4,0) {$u_3$};
  \node (u4) at (-5,0) {$u_4$};
  \node (u5) at (-4.5,1.5) {$u_5$};
  \node (v1) at (2,1.6) {$v_1$};
  \node (v2) at (3.2,2.1) {$v_2$};
  \node (v3) at (4.4,2) {$v_3$};
  \node (v4) at (5,1) {$v_4$};
  \node (v5) at (4,0.75) {$v_5$};
  \node (h1) at (-0.75,3.5) {$w^{i}_2$};
  \node (h2) at (0.2,-1.2) {$w^{i}_1$};
  \node (h2b) at (-5,-2) {$w^{i}_1$};
  \node (h3) at (1.2,-1.3) {$w^{i}_4$};
  \node (k1) at (2.5,-2) {$w^i_3$};
  \tikzset{every node/.style={edge-label}}
  \draw[red] (x) -- (u5) -- node {$4$} (u4) -- node {$3$} (u3) -- node {$4$}
   (u2) -- node {$5$} (u1) -- node {$7$} (x) -- node {$7$} (y) -- node {$7$} (xx) -- node {$7$} (v1) -- node {$5$}
   (v2) -- node {$4$} (v3) -- node {$3$} (v4) -- node {$4$} (v5) -- (xx);
  \draw[dashed,thick] (h2b) to[bend right=12] (h3) (h2b) to[bend right=15] (y);
  \draw[blue]
   (u2) .. controls (5,-7.5) and (7,3) .. (v3)
   (u3) .. controls (6.5,-8.8) and (8.2,5.5) .. (v2)
   (u1) .. controls (2,-3.8) and (4.2,-3) .. node {2} (v4)
   (u4) .. controls (9,-10.2) and (8.5,8.8) .. node {2} (v1)
  ;
  \draw (h1) .. controls (-7,2.5) and (-8,-2.5) .. node {2} (h2) 
	 (h2) to[bend right=0] (h3) (h3) edge[thick] node {2} (k1)
	 (k1) .. controls (9.3,3) and (7,4.4) .. (h1);
  \draw[dashed,thick]
	(h1) .. controls (-6.6,3.5) and (-7,-1) .. node {2} (h2b);
  \draw (h2) -- (y) (xx) -- (h3);
  \begin{scope}[on background layer]
   \draw[fill=black!20!white] (x.center) to[bend right=16] (u5.center) -- (h1.center) to[bend right=16] cycle;
   \draw[fill=black!20!white] (xx.center) to[bend right=-16] (k1.center) --
(v5.center) to[bend right=-16] cycle;
  \end{scope}
  \node[rotate=29, draw=none, fill=none] (label) at (138:3) {\huge$\cdots$};
  \node[rotate=58, draw=none, fill=none] (label) at (-9:3) {\huge$\cdots$};
 \end{tikzpicture}

 \caption{Two schematic drawings of the graph $G$ from the proof of Theorem~\ref{thm:13altcrit}.
	The dashed lines show alternative routings of some of the edges,
	and one may straightforwardly split the gray vertices $y^*$ and/or $y^i$ in order to obtain the full drawing of~$G$ as required.
	a) A drawing with $13$ crossings which drop down to $12$ crossings after deleting any one of the edges
	$y^*x^i$ or $x^iy^i$.
	b) A drawing with $18$ crossings which drop down to $12$ crossings after deleting any one of the edges
	$w_1^iw_2^i$, $x^iw_1^i$, $w_4^iw_1^i$ or $w_2^iw_3^i$.}
 \label{fig:splitwcrit}
\end{figure}

\begin{proof}
Thanks to Lemma~\ref{lem:newgeq13}, we only have to prove that the crossing number of $G:=G_{13}^{(k_1,\ldots,k_m)}$
drops below $13$ whenever we delete any edge $e\in E(G)$.
Let $k=k_1+\ldots+k_m$.
For every edge $e$ induced on the vertex subset $\{u_5,u_4,u_3,u_2,u_1,x^1,y^k,v_1,v_2,v_3,v_4,v_5\}$,
this has already been shown in \cite[Theorem~5.7]{DBLP:journals/combinatorica/BokalDHLMW22}, since the critical drawings exhibited there,
precisely in \cite[Figure~6]{DBLP:journals/combinatorica/BokalDHLMW22},
can easily be modified to cover also graphs $G_{13}^{(k_1,\ldots,k_m)}$ with half-integral parameters.
For the sake of completeness, we repeat the respective two drawings in the appendix here.

Regarding the (red) edges of the spine which are of the form $y^{i-1}x^{i}$ (i.e., between the $(i-1)$-th and $i$-th wedges)
or the form $x^iy^i$ (i.e., belongs to the $i$-th wedge),
sought drop in the crossing number is witnessed by the drawings in Figure~\ref{fig:splitwcrit}(a).
Note that a symmetric drawing of the $i$-th wedge can witness criticality of the next spine edge $y^ix^{i+1}$, too.
Similarly, for the remaining edges of the $i$-th wedge, drop in the crossing number is witnessed by the drawings
in Figure~\ref{fig:splitwcrit}(b) and their symmetric variants.
These drawings are used the same way both for wedges with $x^i\not=y^i$ and with contracted $x^iy^i$ for wedges with $x^i=y^i$.
We have checked all edges of~$G$.
\qed\end{proof}

\subsection{Getting all possible vertex degrees}

As one can easily check from Definition~\ref{def:ccg13kkk}, in the graph $G_{13}^{(k_1,\ldots,k_m)}$ the vertex
degree corresponding to the parameter $k_i$, where $1<i<m$, is exactly $14+2k_i$ 
(and it is by one higher for $k_1$ or $k_m$ because of the green edges).
We can thus have arbitrary numbers of vertices of degrees $15$ and higher for suitably chosen parameters $k_1,\ldots,k_m$.
We can see from Figure~\ref{fig:main13} that there are also unbounded numbers of vertices of degrees $4$ and $6$.

Vertices of degree~$3$ can be introduced into the construction with help of the following lemma:

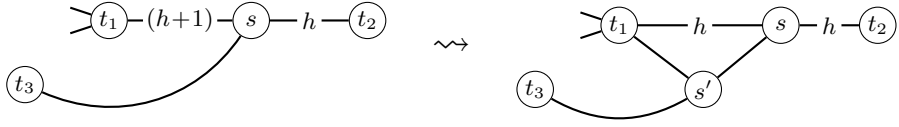
\begin{figure}[t]
 \centering\medskip
  \begin{tikzpicture}[scale=0.85]
  \tikzset{every node/.style={labeled}}
  \tikzset{every path/.style={thick}}
  \node (v2) at (1,0) {$t_1$};
  \node (v3) at (3.2,0) {$s$};
  \node (v4) at (5,0) {$t_2$};
  \node (u2) at (-0.3,-1) {$t_3$};
  \tikzset{every node/.style={edge-label}}
  \draw (v2) -- node {$(h\!+\!1)$} (v3) -- node {$h$} (v4) ;
  \draw (v2) to (0.4,0.2);
  \draw (u2) edge[bend right=40] (v3) ;
  \draw (v2) to (0.4,-0.2);
  \end{tikzpicture}
\raise7ex\hbox{\Large$\quad\leadsto\quad$}
  \begin{tikzpicture}[scale=0.85]
  \tikzset{every node/.style={labeled}}
  \tikzset{every path/.style={thick}}
  \node (v2) at (1,0) {$t_1$};
  \node (v31) at (3.5,0) {$s$};
  \node (v33) at (2.3,-1) {$s'$};
  \node (v4) at (5,0) {$t_2$};
  \node (u2) at (-0.3,-1) {$t_3$};
  \tikzset{every node/.style={edge-label}}
  \draw (v2) -- node {$h$} (v31) -- node {$h$} (v4) ;
  \draw (v2) to (0.4,0.2);
  \draw (v31) -- (v33) -- (v2) ;
  \draw (u2) edge[bend right=30] (v33) ;
  \draw (v2) to (0.4,-0.2);
  \end{tikzpicture}

 \caption{An illustration of the operation of locally introducing a vertex ($s'$) of degree $3$ from Lemma~\ref{lem:plus3deg}.
 This operation can be applied, e.g., to vertices $t_1=w_2^2$, $s=w_1^2$, $t_2=w_4^2$, and $t_3=x^2$ of Figure~\ref{fig:splitwedge}.}
 \label{fig:v3-split}
\end{figure}

\begin{lemma}[{\cite[Lemma~6.1]{DBLP:journals/combinatorica/BokalDHLMW22}}]\label{lem:plus3deg}
Assume a graph $H$ with vertices $t_1,t_2,t_3$ and $s$ such that
\begin{itemize}
\item[a)]
vertex $s$ has no other neighbours than $t_1,t_2,t_3$ in $H$,
the edge $t_1s$ is of weight $h+1$, $t_2s$ is of weight $h$, $t_3s$ is of weight $1$,
and
\item[b)]
vertex $t_1$ is of degree at most $h+5$ in $H$.
\end{itemize}
Other edges of $H$ are not important. 

Let $H'$ be created by making the edge $t_1s$ only weight $h$, deleting the edge $t_3s$,
and adding a new vertex $s'$ adjacent via three weight-$1$ edges to the vertices $t_1,t_3$ and~$s$.
See Figure~\ref{fig:v3-split}. Then $\crg(H')\geq\crg(H)$.
Furthermore, if $H$ is a $c$-crossing-critical graph and $\crg(H'-ss')<c$, then $H'$ is also $c$-crossing-critical.
\end{lemma}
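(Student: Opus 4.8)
The plan is to prove the two assertions in turn, the inequality $\crg(H')\ge\crg(H)$ being the substantial one. I would first record the structural observation that $H'$ arises from $H$ by subdividing one of the $h+1$ parallel $t_1s$-edges, say $f_0$, with the new vertex $s'$ (splitting $f_0$ into $e_1:=t_1s'$ and $e_s:=s's$), and then replacing the single $t_3s$-edge by the edge $e_3:=t_3s'$. Consequently, from any drawing of $H'$ one obtains a drawing of $H$ by re-routing $e_3$ so that it ends at $s$ rather than at $s'$ and then suppressing the (now degree-$2$) vertex $s'$, which merges $e_1$ with $e_s$ into the ``extra'' $t_1s$-edge.

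For the hard inequality I would start from an optimal drawing $D'$ of $H'$, normalized by Proposition~\ref{pro:folklorecr} so that the $h$ remaining $t_1s$-edges form a closely-drawn bundle $F$ (and no two edges cross twice, adjacent edges do not cross). Re-routing $e_3$ straight from $s'$ along $e_s$ to $s$ would make it pick up exactly the crossings currently carried by $e_s$, which is uncontrolled, so instead I would re-route $e_3$ on the $t_1$-side: lead it from $s'$ alongside $e_1$ into a small disc around $t_1$, let it leave that disc on the side of the bundle $F$, and then follow one strand of $F$ down to $s$. This is where hypothesis~(b) is meant to enter: the ``block'' $F\cup\{e_1\}$ uses weight $h+1$ at $t_1$, and $\deg_H(t_1)\le h+5$ leaves at most four further edge-ends there, so on the cheaper side the local detour of $e_3$ around $t_1$ crosses at most two of them. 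The remaining task --- and the main obstacle --- is the crossing bookkeeping: one has to verify, using the Jordan curve theorem applied to the cycles built from $F$, from the path $e_1\cup e_s$, and from the weight-$h$ bundle $t_2s$, and using that in $D'$ the path $e_1\cup e_s$ may be taken to behave as the extra strand of the weight-$(h+1)$ $t_1s$-bundle of $H$, that the crossings created by the suppression of $s'$ and by this detour are all compensated by crossings that were destroyed; the precise constant in~(b) and the weight $h$ of $t_2s$ are what make this balance tight.

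Before treating criticality I would dispose of the easy reverse inequality $\crg(H')\le\crg(H)$, which is needed below. In a normalized optimal drawing of $H$ the rotation at $s$ consists of the contiguous $t_1s$-bundle, the contiguous $t_2s$-bundle and the single $t_3s$-edge, so $t_3s$ is there adjacent to some $t_1s$-edge $f_0$; placing $s'$ on $f_0$ arbitrarily close to $s$ and diverting the end of $t_3s$ from $s$ to $s'$ creates no new crossing, giving a drawing of $H'$ with the same crossing number.

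Finally, for crossing-criticality of $H'$ I would argue thus. By the hard inequality $\crg(H')\ge\crg(H)\ge c$, so it only remains to show $\crg(H'-e')<c$ for every edge $e'$ of $H'$. The case $e'=ss'$ is the hypothesis. If $e'=t_3s'$ or $e'=t_1s'$ then $s'$ has degree $2$ and suppressing it turns $H'-e'$ into $H$ with, respectively, the $t_3s$-edge or one $t_1s$-edge removed, so $\crg(H'-e')<c$ by criticality of $H$. For every other edge $e'$ of $H'$ --- a $t_1s$- or $t_2s$-edge of $H'$, or one of the ``unimportant'' edges --- the graph $H'-e'$ is (isomorphic to) the lemma's construction applied to $H-e'$, and $H-e'$ still has near $s$ the structure required by the previous paragraph, whence $\crg(H'-e')\le\crg(H-e')<c$. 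Thus $H'$ is $c$-crossing-critical; the one genuinely hard ingredient remains the crossing-count balance in the second paragraph.
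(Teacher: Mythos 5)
The paper does not prove this lemma at all: it is imported verbatim from \cite[Lemma~6.1]{bokal2022bounded}, so there is no in-paper proof to compare against. Judged on its own merits, your write-up gets the easy direction $\crg(H')\le\crg(H)$ and the criticality bookkeeping essentially right (the three-block rotation at $s$, the suppression of the degree-$2$ vertex $s'$ for $e'\in\{t_1s',t_3s'\}$, and the reduction of the remaining cases to $H-e'$ are all sound). The problem is the central inequality $\crg(H')\ge\crg(H)$, where your argument has a genuine gap that you yourself flag as ``the main obstacle'' and then do not close.

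Concretely: your transformation keeps the original route of $e_3$ from $t_3$ to the former position of $s'$ and then extends it alongside $e_1$, around $t_1$ (at most $2$ further crossings by hypothesis~(b) --- that part is correct), and down a strand of the bundle $F$. This \emph{only adds} crossings --- namely $a$ (the crossings on $e_1$), up to $2$ at $t_1$, and $b'$ (the crossings on one $F$-strand) --- and \emph{destroys none}, since suppressing the degree-$2$ vertex $s'$ is crossing-neutral. So the promised ``compensation by crossings that were destroyed'' has nothing to draw on, and the inequality does not follow. Nor can one fix this by a single uniform rerouting: the alternative of extending $e_3$ along $e_s$ costs $k$ (the crossings on $e_s$), and the alternative of discarding the path $e_1\cup e_s$ altogether, drawing the extra $t_1s$-strand parallel to $F$, costs $b'-a$; one can write down crossing patterns at $s'$ (e.g.\ $a=0$, $k=1$, each $F$-strand crossed once) in which every one of these candidate redrawings increases the count by $1$, and optimality of $D'$ does not obviously exclude them. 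A correct proof therefore needs a genuine case analysis on the configuration of crossings on $e_1$, $e_s$, $e_3$ and the two bundles at $s$ and $t_1$ (this is where the Jordan-curve arguments and the tight constants in (a),(b) actually do work), choosing a different redrawing in each case --- which is exactly the content of the cited \cite[Lemma~6.1]{bokal2022bounded} that your sketch postpones. As it stands, the main assertion of the lemma is asserted rather than proved.
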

This lemma applies, e.g., to vertices $t_1=w_2^i$, $s=w_1^i$, $t_2=w_4^i$ and $t_3=x^i$ of the $i$-th wedge
for any $1<i<m$ (and any subset of such wedges),
where $c=13$, $h=1$, $t_1$ is indeed of degree~$6$, and $\crg(H'-ss')<13$ is witnessed by a drawing obtained
from the one in Figure~\ref{fig:splitwcrit}(b) in which the path $(x^i,s',w_2^i)$ is routed directly without crossing.

It thus remains to cover the vertex degrees of $5,7,8,\ldots,14$.
For that we introduce a statement conceptually similar to Lemma~\ref{lem:plus3deg} for such degrees
(equal to $2b+1$ or $2b+2$ in the next).

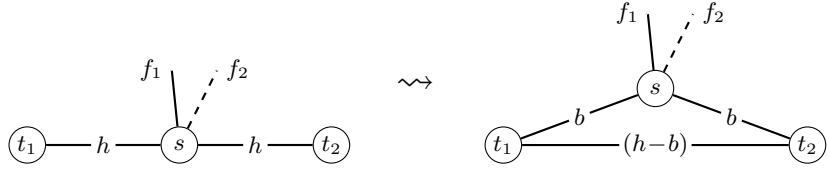
\begin{figure}[t]
 \centering\medskip
  \begin{tikzpicture}
  \tikzset{every node/.style={labeled}}
  \tikzset{every path/.style={thick}}
  \node (t1) at (1,0) {$t_1$};
  \node (s) at (3,0) {$s$};
  \node (t2) at (5,0) {$t_2$};
  \tikzset{every node/.style={edge-label}}
  \draw (t1) -- node {$h$} (s) -- node {$h$} (t2) ;
  \draw (s) -- (2.9,1) node[label=left:$f_1$] {};
  \draw[dashed] (s) -- (3.5,1) node[label=right:$f_2$] {};
  \end{tikzpicture}
\raise7ex\hbox{\Large$\quad\leadsto\quad$}
  \begin{tikzpicture}
   \path[use as bounding box] (0.7,-1) rectangle (5.3,1.2);
  \tikzset{every node/.style={labeled}}
  \tikzset{every path/.style={thick}}
  \node (t1) at (1,-0.75) {$t_1$};
  \node (s) at (3,0) {$s$};
  \node (t2) at (5,-0.75) {$t_2$};
  \tikzset{every node/.style={edge-label}}
  \draw (t1) -- node {$(h\!-\!b)$} (t2) ;
  \draw (t1) -- node {$b$} (s) -- node {$b$} (t2) ;
  \draw (s) -- (2.9,1) node[label=left:$f_1$] {};
  \draw[dashed] (s) -- (3.5,1) node[label=right:$f_2$] {};
  \end{tikzpicture}

 \caption{An illustration of the operation of locally introducing a vertex ($s'$) of degree $5,7,8,\ldots$ from Lemma~\ref{lem:plus5deg}.
	 This operation can be applied, e.g., with $h=7$ to the vertex $s=y^1$ and the edge $f_1=y^1w_4^1$ of Figure~\ref{fig:splitwedge}.}
 \label{fig:v7-split}
\end{figure}

\begin{lemma}\label{lem:plus5deg}
Assume a graph $H$ with vertices $t_1,t_2$ and $s$ such that
\begin{itemize}
\item[a)]
the edges $t_1s$ and $st_2$ are each of weight $h$, and there are only $a\in\{1,2\}$ additional edge(s) $f_1$, $f_a$ incident to $s$ -- those are of weight~$1$,
and
\item[b)]
the crossing number $c=\crg(H)$ of $H$ is at most $2h+1$.
\end{itemize}
Other edges of $H$ are not important. 

Let $H'$ be constructed from $H$ by making the edges $t_1s$ and $st_2$ of weight only $b$, where $a+1\leq b \leq h-1$,
and adding a new edge $t_1t_2$ of weight~$h-b$.
See Figure~\ref{fig:v7-split}. Then $\crg(H')\geq\crg(H)=c$.
Furthermore, if $H$ is a $c$-crossing-critical graph, and either $a=1$, or $a=2$ and for every edge $e\in E(H)\setminus\{f_1,f_2\}$
there is a drawing of $H-e$ with less than $c$ crossings in which the edges $f_1$ and $f_2$ are consecutive in
the cyclic order around $s$, then $H'$ is $c$-crossing-critical, too.
\end{lemma}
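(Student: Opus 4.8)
The statement has two parts: the inequality $\crg(H')\geq\crg(H)=c$, and the preservation of $c$-crossing-criticality. For the inequality, the natural strategy is to show that any drawing of $H'$ can be transformed into a drawing of $H$ with no more crossings. Take an optimal drawing $D'$ of $H'$. Since the three edge bundles $t_1s$ (weight $b$), $st_2$ (weight $b$), and $t_1t_2$ (weight $h-b$) together join $t_1,s,t_2$ in a ``theta-like'' configuration, I would like to route all the weight corresponding to the old $t_1s$ and $st_2$ bundles of $H$ through $s$. Concretely: in $D'$, route a copy of the $t_1t_2$-bundle closely alongside the two-edge path $(t_1,s,t_2)$ on whichever side minimizes crossings; by Proposition~\ref{pro:folklorecr}(b) I may assume the $t_1s$, $st_2$, and $t_1t_2$ bundles are each drawn as tight ``cables,'' so the $t_1t_2$ cable either already runs alongside $(t_1,s,t_2)$ or can be slid there, and crucially it does not cross the only edges of $H'$ incident to $s$ besides $t_1s,st_2$ — namely $f_1$ (and $f_2$ if $a=2$) — once we route on the correct side, since those have combined weight $a\leq 2 \leq b$. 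This is where condition (a), $a+1\le b$, is used: there is ``room'' at $s$ for the rerouted cable to pass on one side of $f_1$ (and $f_2$) without extra cost. The rerouting yields a drawing of $H$ (with the $t_1s$ and $st_2$ edges now of weight $h$) with at most as many crossings, so $\crg(H')\le\crg(H)$; combined with the easy reverse inequality (contract or ``split off'' — actually $H$ is obtained from $H'$ by a weighted split at $s$, and such an operation never increases crossing number either, giving $\crg(H)\le\crg(H')$), we get equality.

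For the criticality part, assume $H$ is $c$-crossing-critical. I must show $\crg(H'-e)<c$ for every edge $e$ of $H'$. Partition the edges of $H'$: (i) edges also present in $H$ with unchanged incidences — here I use that $H$ is $c$-critical together with the fact that deleting such an edge in $H'$ and then performing the same rerouting-to-$H$ argument reduces to a drawing of $H-e$ with $<c$ crossings (care is needed when $e$ is in one of the modified bundles); (ii) the new edge $t_1t_2$ of weight $h-b$; (iii) the partially-deleted $t_1s$/$st_2$ bundles, i.e.\ reducing their weight. For (ii), deleting the whole $t_1t_2$ bundle: take an optimal drawing of $H$ (which has exactly $c$ crossings, and moreover $c\le 2h+1$ by (b)), and read off a drawing of $H'-t_1t_2$ — but $H'-t_1t_2$ is just $H$ with the $t_1s,st_2$ bundles thinned to weight $b<h$, so any drawing of $H$ gives a drawing of $H'-t_1t_2$ with at most $c$ crossings, and strictly fewer once $b<h$ because thinning a crossed bundle strictly drops the count, unless those bundles are uncrossed in the optimal drawing of $H$; handling that subcase is where I expect to lean on the bound $c\le 2h+1$ together with the hypothesis about $f_1,f_2$ being consecutive around $s$ (when $a=2$), mimicking the role that bound plays for Lemma~\ref{lem:plus3deg}. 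For (iii), deleting one edge of the weight-$b$ bundle $t_1s$ (or $st_2$) leaves weights $b-1\ge a$ on it; here I would again transform to a near-drawing of $H-(\text{one edge of }t_1s)$ or use $c$-criticality of $H$ applied to an edge of the corresponding bundle of $H$, exploiting that $H$ is critical so $\crg(H-e_0)<c$ for $e_0$ in the $t_1s$ bundle of $H$, and then that this drawing, after thinning, also serves $H'$.

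The main obstacle I anticipate is the subcase analysis in the criticality part when the relevant bundles at $s$ happen to be \emph{uncrossed} in the optimal drawing of $H$: then naively thinning them does not drop the crossing number, and I must instead produce a genuinely better drawing of the deleted graph. This is exactly the situation the technical hypothesis in the lemma (for $a=2$: existence, for each non-$f_i$ edge $e$, of a $<c$-crossing drawing of $H-e$ with $f_1,f_2$ consecutive around $s$) is designed to rescue — it lets me insert the rerouted cable ``between'' $f_1$ and $f_2$ without cost — and where the assumption $c=\crg(H)\le 2h+1$ does the work for the remaining cases, because a bundle of weight $\approx h$ that is crossed even once contributes $\ge h$ crossings, which combined with the structure forces $\ge c$ and hence near-tightness that I can exploit. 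So the real content is a careful case split on which bundle $e$ belongs to and whether the $s$-bundles are crossed, with the two hypotheses (b) and the $f_1,f_2$-consecutivity condition plugging the two gaps; the geometric rerouting itself is routine.
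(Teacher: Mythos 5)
The core inequality $\crg(H')\ge\crg(H)$ is where your argument has a genuine gap. You take an optimal drawing $D'$ of $H'$ and slide the $t_1t_2$-cable so that it runs alongside the path $(t_1,s,t_2)$, and you treat the only obstruction as local at $s$ (the edges $f_1,f_a$), disposed of by choosing a side. But the real cost of this rerouting is global: writing $c_0,c_1,c_2$ for the total weight of edges crossing the bundles $t_1t_2$, $t_1s$, $st_2$ in $D'$, the slide changes the crossing count by $(h-b)(c_1+c_2-c_0)$, which is positive whenever $c_1+c_2>c_0$ --- and nothing prevents an optimal $D'$ from drawing the $t_1t_2$-cable through a region where it crosses far less than the path through $s$ does. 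So your rerouting establishes the inequality only in the case $c_1+c_2\le c_0$, which is exactly the easy first case of the paper's proof. (Incidentally, you state the conclusion of this step with the inequality reversed: converting a drawing of $H'$ into a drawing of $H$ with no more crossings yields $\crg(H)\le\crg(H')$, which is the direction the lemma asserts; it does not claim equality, so no ``reverse inequality'' is needed.)

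The missing idea is the complementary case $c_1+c_2\ge c_0+1$, where the paper moves the weight the \emph{other} way. First, $\crg(H')\ge(c_1+c_2)b+c_0(h-b)\ge c_0h+b$, so $c_0\ge2$ already forces $\crg(H')\ge2h+b\ge2h+2>2h+1\ge c$; this is where $b\ge a+1\ge2$ and hypothesis (b) actually enter (not in the criticality part, where you place them). If $c_0\le1$, one instead deletes both bundles through $s$ (saving $(c_1+c_2)b$), prolongs $f_1,f_a$ along the former $t_1s$ towards $t_1$ (costing $c_1a\le c_1b$), inflates $t_1t_2$ to weight $h$ (costing $c_0b\le b\le c_2b$, using $c_2\ge1$ w.l.o.g.), and then splits $t_1$ to recreate $s$ without new crossings; this yields a drawing of $H$ with at most $\crg(H')$ crossings. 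Without this second construction the proof of $\crg(H')\ge c$ is incomplete. Your sketch of the criticality part is essentially aligned with the paper's (split-and-reconnect the $st_1,st_2$ bundles near $s$ in the assumed drawings of $H-e$, using the consecutivity of $f_1,f_2$ to find a side of the path $(t_1,s,t_2)$ free for the bypass cable), but as written it remains a plan with acknowledged unresolved subcases rather than a proof.
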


\begin{proof}
Consider an optimal drawing of $H'$.
Let the edge $t_1t_2$ be crossed by $c_0$ edges (counting their weights), and similarly let $t_1s$ and $st_2$ be crossed by $c_1$ and $c_2$ edges, respectively.
First, if $c_1+c_2\leq c_0$, then we simply remove the edge $t_1t_2$ and increase the weights of $t_1s$ and $st_2$ from $b$ to $h$,
and the resulting drawing of $H$ will have at most $\crg(H')$ crossings; hence $\crg(H')\geq\crg(H)$.

Otherwise, $c_1+c_2\geq c_0+1$, and we have $\crg(H')\geq(c_1+c_2)b+c_0(h-b)\geq c_0h+b$, but $\crg(H)\leq2h+1$.
So, we have $\crg(H')\geq\crg(H)$ or $c_0\leq1$.
In the second case, we, assuming $c_1\leq c_2$ up to symmetry, delete from $H'$ the edge $st_2$.
Then we ``detach'' the $a$ edge(s) $f_1,f_a$ from their end $s$ and prolong them along the edge $t_1s$ towards $t_1$ as their new end.
We delete $t_1s$ and increase the weight of the edge $t_1t_2$ to~$h$, 
and denote by $H''$ the new drawing which is actually the graph obtained from $H$ by contracting the multiple edge $t_1s$.
This modification results in $\crg(H'')\leq\crg(H')-c_2b+c_1a-c_1b+c_0b\leq \crg(H')+(b-c_2b)+c_1(a-b)\leq \crg(H')$ crossings.
In~$H''$, we may now easily split the vertex $t_1$ to make a new vertex~$s'$ (but no new crossings), 
such that this results in a graph isomorphic to~$H$.
Therefore, we get (again) that $\crg(H)\leq\crg(H')$.

Regarding criticality of $H'$, we can straightforwardly modify the assumed drawings of $H-e$ for $e\in E(H)$ of weight~$1$
(yes, including $e\in\{f_1,f_a\}$) with less than $c$ crossings, such that in a close neighbourhood of the vertex $s$
the edges $st_1$ and $st_2$ are ``split and reconnected'' to make a graph isomorphic to~$H'-e$, drawn with less than $c$ crossings.
In addition to that, for $e'=t_1t_2\in E(H')$ of weight~$1$, we similarly obtain a drawing of $H'-e'$ by a modification
of that of $H-e$ where $e=t_1s$.
\qed\end{proof}

We will apply Lemma~\ref{lem:plus5deg}, with $h=7$, to those vertices of the spine of the graph $G_{13}^{(k_1,\ldots,k_m)}$
which are incident to only one wedge (with one or two vertices of this wedge).
All conditions of the lemma are easily satisfied with $c=13$, except possibly, when $a=2$ and $e$ is a red (spine) edge incident to~$s$,
of the condition of having a drawing of $H-e$ with less than $c$ crossings and consecutive $f_1,f_2$ around~$s$.
In the latter situation, we will use critical drawings as in Figure~\ref{fig:splitwcrit}(a) with the $i$-th wedge
incident to $t_1$ or to $t_2$ (but not the wedge incident to $s$ which does not fit).

\smallskip
We can now finish the last result of our paper.

\begin{proof}[{\it of Theorem~\ref{thm:main13impro}}]
We assume $d\geq8$ and choose $m=1+3q(d-5)$.
For $i=1,4,7,\ldots,m-3,m$, set $k_i=1$.
For $i=2,5,8,\ldots,12q-1$, set again $k_i=k_{i+1}=1$.
For $i=12q+2,12q+5,\ldots,27q-1$, set $k_i=k_{i+1}=\frac12$.
Finally, for $\ell=15,16,\ldots,d-1,d$ and $i\in I_\ell:=\{3q(\ell-6)+2,3q(\ell-6)+5,\ldots,3q(\ell-5)-1\}$,
set $k_i=\frac12(\ell-14)$, and $k_{i+1}=1$ if $\ell$ is even and $k_{i+1}=\frac12$ if $\ell$ is odd.
This fulfills the conditions of Definition~\ref{def:ccg13kkk}.

We start the construction with the graph $G:=G_{13}^{(k_1,\ldots,k_m)}$.
Let $z_i$, for $i\in\{1,\ldots,m\}$, denote the vertex of the spine of $G$ which in Definition~\ref{def:ccg13kkk}
corresponds to the parameter $k_i$ (i.e., $z_i$ results by the contraction of $Q_i$ there).
Then, clearly, each of the $q$ vertices $z_i$ for $i\in I_\ell$ is of degree exactly $\ell\geq15$.
There are at least $q$ vertices of degree $4$ and $6$ in the wedges.
To get desired vertices of degree $8$, we apply Lemma~\ref{lem:plus5deg} with $a=2$ and $b=3$ to the $q$ vertices
$s:=z_i$ for $i=2,5,\ldots,3q-1$.
Likewise, we apply Lemma~\ref{lem:plus5deg} with $a=2$ and $b=4$ to the $q$ vertices $s:=z_i$ for 
$i=3q+2,3q+5,\ldots,6q-1$, to get desired vertices of degree $10$, and similarly for degrees $12$ and $14$.
Then we apply Lemma~\ref{lem:plus5deg} with $a=1$ and $b=2$ to the $q$ vertices $s:=z_i$ for 
$i=12q+2,12q+5,\ldots,15q-1$, giving $q$ vertices of degree~$5$, and so on with degrees $7,9,11,13$.
Finally, we apply Lemma~\ref{lem:plus3deg} to selected $q$ wedges in order to create vertices of degree~$3$.
The final outcome is a $3$-connected $13$-crossing-critical graph $G'$ with 
at least $q$ vertices of each of the degrees $3$, $4$, $\ldots$,~$d$.

If $c>13$, then we may use the same trick as used in \cite[Proof of Corollary~1.3]{DBLP:journals/combinatorica/BokalDHLMW22};
we make a so-called zip product of the graph $G'$, on some of its vertices of degree $3$, with $c-13$ copies
of the $1$-crossing-critical graph $K_{3,3}$, and the resulting will be $c$-crossing-critical and $3$-connected.
\qed\end{proof}

\section{Conclusion}

We have supplemented the previous related paper \cite{DBLP:journals/combinatorica/BokalDHLMW22} with a new computer-free
proof of the lower bound on the crossing number of the critical construction in \cite{DBLP:journals/combinatorica/BokalDHLMW22}.
Furthermore, we have given the definitive positive answer to the research question raised by Bokal
[private communication] already in 2007; 
which vertex degrees, other than the degrees~$3,4,6$ which are present in nearly all known constructions, 
can occur arbitrarily often in infinite $c$-crossing-critical families for fixed values of~$c$?
The first answer to this question of Bokal was given in papers 
\cite{DBLP:journals/combinatorics/Hlineny08,DBLP:journals/combinatorics/BokalBDH19};
claiming that any combination of degrees is achievable arbitrarily often, but at the expense of
$c$ growing with the maximum requested degree.
Our paper takes the answer to a higher level, claiming that the same is possible to achieve
already with $c=13$, which is at the same time the smallest value of $c$ for which such constructions could exist,
based on \cite{DBLP:journals/combinatorica/BokalDHLMW22}.

\bibliographystyle{plain}
\bibliography{ccc-13proof}



\newpage\appendix

\section*{\Large{\sc Appendix I:} Full proof of Lemma~\ref{lem:wedge-}}

{\def\thelemma{\ref{lem:wedge-}}
\begin{lemma}[{\rm from the proof of} {\cite[Lemma~5.6]{DBLP:journals/combinatorica/BokalDHLMW22}}]\label{lem:wedge-AP}
Let $m\geq j\geq1$ and $k_1,\ldots,k_m$ be positive half-integers, such that $k_j\geq\frac32$.
Then $\crg\big(G_{13}^{(k_1,\ldots,k_j,\ldots,k_m)}\big)\geq\crg\big(G_{13}^{(k_1,\ldots,k_j-1,\ldots,k_m)}\big)$.
\end{lemma}
}

\begin{figure}[b]
\vspace*{-8ex}%
a)\hspace*{-2ex}%
 \newcommand{\contour}[5]{\draw[fill=#2!50!white,draw=none,shift=(#1.center),xscale=.8333] (0,0) -- +(#3:#5) arc (#3:#4:#5);}
 \begin{tikzpicture}[xscale=0.9, yscale=1]
  \tikzset{every node/.style={labeled}}
  \tikzset{every path/.style={thick}}
  \node (x) at (0,0) {$x^i$};
  \node (u5) at (-3.6,2) {$u_5$};
  \node (v5) at (3.6,2) {$w^{*}_3$};
  \node (a1) at (-2.2,3.2){$\!w^{i\!-\!1}_2\!$};
  \node (a2) at (0,3.7){$w^i_2$};
  \node (a3) at (2.2,3.2){$w^{i}_3$}; 
  \node (b3) at (-1.2,2.1){$\!w^{i\!-\!1}_1\!$};
  \node (b4) at (-0.4,2.3){$\!w^{i\!-\!1}_4\!$};
  \node (b6) at (1.2,2.1){$w^{i}_4$};
  \node[simple] (b5) at (0.4,2.3){$w^{i}_1$};
  \tikzset{every node/.style={edge-label}}
 \begin{scope}[on background layer]
	\draw[blue!50!white,line width=5pt] (a1.center) -- (a2.center) -- (a3.center);
	\contour{a2}{blue}{0}{180}{.41}
	\draw[green!60!gray,line width=5pt] (b3.center) -- (b4.center) -- (a2.center)
		 -- (b5.center)  -- (b6.center);
	\contour{b4}{green!60!black}{190}{60}{.48}
	\contour{a2}{green!60!black}{-100}{-80}{.44}
	\contour{b5}{green!60!black}{120}{-10}{.44}
 \end{scope}
\draw[red] (u5) to[bend right=10] (x);
\draw (a1) -- (a2) -- (a3);
\draw (b3) -- (x) -- (b4);
\draw (b5) -- (x) -- (b6);
\draw  (a1) edge[thick] node {2} (b3);
\draw (b3) -- (b4) edge[thick] node {2} (a2) (a2) edge[thick] node {2} (b5) (b5)--(b6);
\draw (b6) edge[thick] node {2} (a3);
\begin{scope}[on background layer]
\draw[fill=black!16!white] (x.center) to[bend right=10] (u5.center) -- (a1.center) to[bend right=14] cycle;
\draw[fill=black!16!white] (x.center) to[bend right=10] (a3.center) -- (v5.center) to[bend right=10] cycle;
\end{scope}
 \node[rotate=35, draw=none, fill=none] (label) at (138:3) {\huge$\cdots$};
 \node[rotate=-35, draw=none, fill=none] (label) at (41:3) {\huge$\cdots$};
\end{tikzpicture}
\hfill
b)\hspace*{-2ex}%
 \begin{tikzpicture}[xscale=1, yscale=1]
  \tikzset{every node/.style={labeled}}
  \tikzset{every path/.style={thick}}
  \node (x) at (0,0) {$x^i$};
  \node (u5) at (-3.6,2) {$u_5$};
  \node (v5) at (3.6,2) {$w^{*}_3$};
  \node (a1) at (-2.2,3.2){$\!w^{i\!-\!1}_2\!$};
  \node (a2) at (0,3.7){$w^i_2$};
  \node (a3) at (1.2,3.3){$w^{i}_3$}; 
  \node (b3) at (-1.2,2.1){$\!w^{i\!-\!1}_1\!$};
  \node (b4) at (-0.4,2.3){$\!w^{i\!-\!1}_4\!$};
  \node (b6) at (1.8,4.6){$w^{i}_4$};
  \node[simple] (b5) at (1,4.8){$w^{i}_1$};
  \tikzset{every node/.style={edge-label}}
 \begin{scope}[on background layer]
	\draw[blue!50!white,line width=5pt] (a1.center) -- (a2.center) -- (a3.center);
	\contour{a2}{blue}{-10}{190}{.43}
	\draw[green!60!gray,line width=5pt] (b3.center) -- (b4.center) -- (a2.center)
		 -- (b5.center)  -- (b6.center);
	\contour{b4}{green!60!black}{190}{60}{.44}
	\contour{a2}{green!60!black}{-100}{-20}{.43}
	\contour{b5}{green!60!black}{-130}{-10}{.44}
 \end{scope}
\draw[red] (u5) to[bend right=10] (x);
\draw (a1) -- (a2) -- (a3);
\draw (b3) -- (x) -- (b4);
\draw (b5) .. controls (4,6.5) and (2,2) .. (x) edge[out=50,in=270] (b6);
\draw  (a1) edge[thick] node {2} (b3);
\draw (b3) -- (b4) edge[thick] node {2} (a2) (a2) edge[thick] node {2} (b5) (b5)--(b6);
\draw (b6) edge[thick] node {2} (a3);
\begin{scope}[on background layer]
\draw[fill=black!16!white] (x.center) to[bend right=10] (u5.center) -- (a1.center) to[bend right=13] cycle;
\draw[fill=black!16!white] (x.center) to[bend right=10] (a3.center) -- (v5.center) to[bend right=10] cycle;
\end{scope}
 \node[rotate=35, draw=none, fill=none] (label) at (138:3) {\huge$\cdots$};
 \node[rotate=-35, draw=none, fill=none] (label) at (41:3) {\huge$\cdots$};
\end{tikzpicture}

 \caption{Two cases of vertex $w^i_2$ of the induction step in the proof of Lemma~\ref{lem:wedge-}.
	In each of them we ``shrink'' two wedges into one by drawing 
	new edges $w_1^{i-1}w_4^{i}$ (green) and $w_2^{i-1}w_3^{i}$ (blue) along the depicted paths.
	In case (a), this introduces no new crossing,while in case (b) the new crossing between the green and the blue
	is ``paid by'' a crossing, which must have been on the $4$-cycle $(x^i,w^{i-1}_4,w^i_2,w^{i}_1)$ before.}
 \label{fig:induction}
\end{figure}

\begin{proof}
Consider an optimal drawing of the graph $G:=G_{13}^{(k_1,\ldots,k_j,\ldots,k_m)}$.
According to Definition~\ref{def:ccg13kkk}, let $i$ be such that $k_1+\ldots+k_{j-1}<i\leq k_1+\ldots+k_{j}$ and $x^i=y^i$,
which is possible since $k_j\geq\frac32$.
We may moreover assume that $y^{i-1}=x^i$; since otherwise we have $x^{i+1}=y^i$ and the subsequent arguments can be
applied symmetrically, to a ``mirror image'' of~$G$.
Our goal is to construct a drawing of $G_{13}^{(k_1,\ldots,k_j-1,\ldots,k_m)}$ with at most $\crg(G)$ crossings.

We distinguish three cases based on the cyclic order of edges leaving the vertex $w_2^i=w_3^{i-1}$:
\begin{itemize}\parskip3pt
\item
The edges incident to $w_2^i=w_3^{i-1}$, in a small neighbourhood of $w_2^i$, 
have the cyclic order $w_2^iw_4^{i-1}$, $w_2^iw_1^{i}$, $w_2^iw_3^{i}$, $w_2^iw_2^{i-1}$ (in any orientation).
See in Figure~\ref{fig:induction}\,a), where this cyclic order is counter-clockwise.
In this case, we can draw a new edge $w_1^{i-1}w_4^{i}$ along the path $(w_1^i,w_4^i,w_3^i,w_1^{i+1},w_4^{i+1})$, 
and another new edge $w_2^{i-1}w_3^{i}$ along the path $(w_2^{i-1},w_2^i,w_3^{i})$ (both new edges are of weight~$1$).
Then we delete the vertices $w_4^{i-1},w_2^i,w_1^{i}$ together with incident edges.
The resulting drawing represents a graph, which is clearly isomorphic to $G_{13}^{(k_1,\ldots,k_j-1,\ldots,k_m)}$
--- the wedges number $i-1$ and $i$ incident to~$y^{i-1}=x^i$ (and possibly to $x^{i-1}$ as well) have been replaced with one wedge.

Moreover, thanks to the assumption, we can avoid crossing between
$w_1^{i-1}w_4^{i}$ and $w_2^{i-1}w_3^{i}$ in the considered neighbourhood of former $w_2^i$.
Therefore, every crossing of the new drawing (including possible crossings
of each of the new edges $w_1^{i-1}w_4^{i}$ and $w_2^{i-1}w_3^{i}$ among themselves or with other edges)
existed already in the previous drawing of~$G$.

\item The same proof as above works if the cyclic order around $w_2^i$ is 
$w_2^iw_4^{i-1}$, $w_2^iw_1^{i}$, $w_2^iw_2^{i-1}$,~$w_2^iw_3^{i}$.

\item 
In a small neighbourhood of $w_2^i=w_3^{i-1}$, the edges have the cyclic order (in any orientation)
$w_2^iw_4^{i-1}$, $w_2^iw_3^{i}$, $w_2^iw_1^{i}$, $w_2^iw_2^{i-1}$.
See Figure~\ref{fig:induction}\,b).
Consider the $4$-cycle $C:=(x^i,w_4^{i-1},w_2^i,w_1^{i})$, which, importantly,
uses only single edges of the weight-$2$ edges incident to $w_2^i$.
In this case of the cyclic order around $w_2^i$, if $C$ is uncrossed, both sides of $C$ contain a vertex of the drawing of $G$.
Since $G-V(C)$ is connected, some edge of $C$ must be crossed in the considered drawing.
Consequently, the subdrawing of $G-E(C)$ has at most $\crg(G)-1$ crossings.

We finish similarly as in the first case, but within $G-E(C)$:
we draw a new edge $w_1^{i-1}w_4^{i}$ along the path $(w_1^{i-1},w_4^{i-1},w_2^i,w_1^{i},w_4^{i})$, 
and another new edge $w_2^{i-1}w_3^{i}$ along the path $(w_2^{i-1},w_2^i,w_3^{i})$
(both new edges are of weight~$1$, and we have so far removed only one of the two edges of each of $w_4^{i-1}w_2^i$ and $w_2^iw_1^{i}$).
These two new edges mutually cross once (at most -- in case that the named paths cross also somewhere else 
than at $w_2^i$, we may eliminate multiple crossings by standard means).
After deleting the original vertices $w_4^{i-1},w_2^i,w_1^{i}$, we hence get a drawing which is again isomorphic 
to $G_{13}^{(k_1,\ldots,k_j-1,\ldots,k_m)}$, and has at most $\crg(G-E(C))\leq\crg(G)-1+1=\crg(G)$ crossings.
\qed\end{itemize}
\end{proof}

\section*{\Large{\sc Appendix II:} Additional drawings for Theorem~\ref{thm:13altcrit}}

Here in Figure~\ref{fig:G13-3red}, we present additional two schematic drawings of the graph $G$ from the proof of Theorem~\ref{thm:13altcrit}
which come from \cite{DBLP:journals/combinatorica/BokalDHLMW22}, and which show that the crossing number of $G-e$ drops below $13$
for every edge $e$ induced on the vertex subset $\{u_5,u_4,u_3,u_2,u_1,x^1,y^k,v_1,v_2,v_3,v_4,v_5\}$.

\begin{figure}[b]
 \centering
a)
 \begin{tikzpicture}[xscale=0.9, yscale=0.90]
   \path[use as bounding box] (-6.3,-1.9) rectangle (5.7,4.8);
  \tikzset{every node/.style={labeled}}
  \tikzset{every path/.style={thick}}
  \node[fill=black!10!white] (x) at (-1,0) {$x^1$};
  \node[fill=black!10!white] (xx) at (0,0) {$y^k$};
  \node (u1) at (-2,0) {$u_1$};
  \node (u2) at (-3,0) {$u_2$};
  \node (u3) at (-4,0) {$u_3$};
  \node (u4) at (-5,0) {$u_4$};
  \node (u5) at (-4.5,1.5) {$u_5$};
  \node (v1) at (1,0) {$v_1$};
  \node (v2) at (2,0) {$v_2$};
  \node (v3) at (3,0) {$v_3$};
  \node (v4) at (4,0) {$v_4$};
  \node (v5) at (3.5,1.5) {$v_5$};
  \node (h1) at (-1,3.75) {$w^{k\!-\!1}_2$};
  \node (h2) at (-0.3,2.5) {$w^{k\!-\!1\!}_1$};
  \node (h3) at (0.75,2.44) {$w^{k\!-\!1\!}_4$};
  \node (k1) at (1.7,3.55) {$w^k_2$};
  \node (k2) at (1.6,2.1) {$w^{k}_1$};
  \node (k3) at (2.2,1.5) {$w^{k}_4$};
  \tikzset{every node/.style={edge-label}}
  \draw[red] (u5) -- node {$4$} (u4) -- node {$3$} (u3) -- node {$4$}
   (u2) -- node {$5$} (u1) -- node {$7$} (x) (xx) -- node {$7$} (v1) -- node {$5$}
   (v2) -- node {$4$} (v3) -- node {$3$} (v4) -- node {$4$} (v5);
  \draw[green!60!black] (v5) -- (xx) (x) -- (u5);
  \draw[dotted,ultra thick] (xx) -- (x);
  \draw[blue]
   (u2) edge[bend right=70] (v3)
   (u3) edge[bend right=70] (v2)
   (u1) edge[bend right=30] node {2} (v4)
   (u4) edge[dashed,bend right=30] node {2} (v1)
  ;
    \draw[blue,thick,dashed]
     (u4) .. controls (-9.2,5.8) and (10.2,5.7) .. node {2} (v1)
     (u4) .. controls (-11.7,6.75) and (15.8,5.6) .. node {2} (v1)
    ;
  \draw (h1) edge[thick] node {2} (h2) (h2) -- (h3) (h3) edge[thick] node {2} (k1) (k1) -- (h1);
  \draw (k1) edge[thick] node {2} (k2) (k2) -- (k3) (k3) edge[thick] node {2} (v5) (v5) -- (k1);
  \draw (h2) -- (xx) -- (h3);
  \draw (k2) -- (xx) -- (k3);
  \begin{scope}[on background layer]
   \draw[fill=black!20!white] (x.center) to[bend right=16] (u5.center) -- (h1.center) to[bend right=16] cycle;
  \end{scope}
  \node[rotate=33, draw=none, fill=none] (label) at (140:3) {\huge$\cdots$};
 \end{tikzpicture}
\\[1ex]b)%
 \begin{tikzpicture}[xscale=0.9, yscale=0.82]
   \path[use as bounding box] (-5.2,-1.7) rectangle (7.1,4.2);
  \tikzset{every node/.style={labeled}}
  \tikzset{every path/.style={thick}}
  \node[fill=black!10!white] (x) at (-1,0) {$x^1$};
  \node[fill=black!10!white] (xx) at (0,0) {$y^k$};
  \node (u1) at (-2,0) {$u_1$};
  \node (u2) at (-3,0) {$u_2$};
  \node (u3) at (-4,0) {$u_3$};
  \node (u4) at (-5,0) {$u_4$};
  \node (u5) at (-4.5,1.5) {$u_5$};
  \node (v1) at (1.5,1) {$v_1$};
  \node (v2) at (3.2,2.1) {$v_2$};
  \node (v3) at (4.4,2) {$v_3$};
  \node (v4) at (5,1) {$v_4$};
  \node (v5) at (3.5,0.35) {$v_5$};
  \node (h1) at (-1,3.75) {$w^{k\!-\!1}_2$};
  \node (h2) at (-0.3,2.5) {$w^{k\!-\!1\!}_1$};
  \node (h3) at (0.75,2.44) {$w^{k\!-\!1\!}_4$};
  \node (k1) at (1.7,3.55) {$w^k_2$};
  \node (k2) at (1.6,2.1) {$w^{k}_1$};
  \node (k3) at (2.4,0.8) {$w^{k}_4$};
  \tikzset{every node/.style={edge-label}}
  \draw[red] (x) -- (u5) -- node {$4$} (u4) -- node {$3$} (u3) -- node {$4$}
   (u2) -- node {$5$} (u1) -- node {$7$} (x) (xx) -- node {$7$} (v1) -- node {$5$}
   (v2) -- node {$4$} (v3) -- node {$3$} (v4) -- node {$4$} (v5) -- (xx);
  \draw[green!60!black] (v5) -- (xx) (x) -- (u5);
  \draw[dotted,ultra thick] (xx) -- (x);
  \draw[blue]
   (u2) .. controls (6,-3.5) and (7,3) .. (v3)
   (u3) .. controls (7.5,-4.5) and (8,5) .. (v2)
   (u1) edge[out=335,in=235] node {2} (v4)
   (u4) .. controls (10,-6.5) and (9.5,10.2) .. node {2} (v1)
  ;
  \draw (h1) edge[thick] node {2} (h2) (h2) -- (h3) (h3) edge[thick] node {2} (k1) (k1) -- (h1);
  \draw (k1) edge[thick] node {2} (k2) (k2) -- (k3) (k3) edge[thick] node {2} (v5) (v5) -- (k1);
  \draw (h2) -- (xx) -- (h3);
  \draw (k2) -- (xx) -- (k3);
  \begin{scope}[on background layer]
   \draw[fill=black!20!white] (x.center) to[bend right=16] (u5.center) -- (h1.center) to[bend right=16] cycle;
  \end{scope}
  \node[rotate=33, draw=none, fill=none] (label) at (140:3) {\huge$\cdots$};
 \end{tikzpicture}

 \caption{Two drawings of the graph $G$ of the proof of Theorem~\ref{thm:13altcrit}, taken from \cite[Figure~6]{DBLP:journals/combinatorica/BokalDHLMW22}.
	a) A drawing with $13$ crossings and with three alternate routings of the dashed edge.
	b) A drawing with $14$ crossings in which the vertices $v_1,v_2,v_3$ can be ``pulled'' towards $y^k$ or ``pushed''
	away from $y^k$ without changing the number of crossings.
	After deleting any one, up to symmetry, of the blue or green edges, or of the red edges except those between
	$x^1$ and $y^k$, one of the depicted drawings drops down to $12$ or less crossings.}
 \label{fig:G13-3red}
\end{figure}
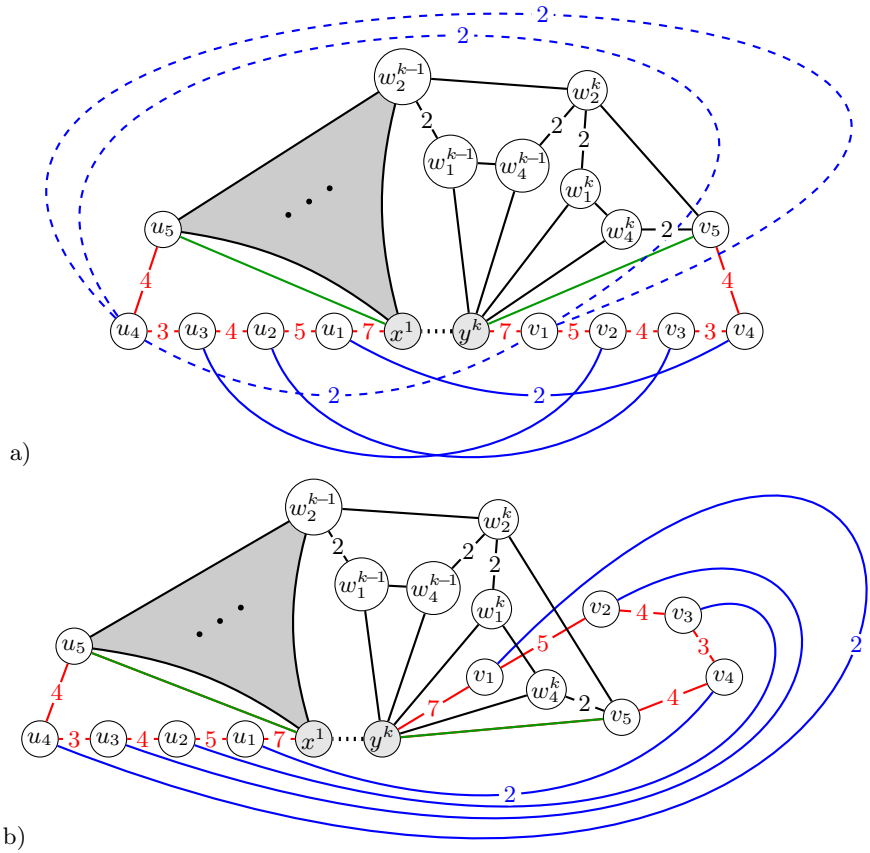

\end{document}